\newtheorem{theorem}{Theorem}[section]
\newtheorem{lemma}[theorem]{Lemma}
\newtheorem{corollary}[theorem]{Corollary}
\newtheorem{proposition}[theorem]{Proposition}
\newtheorem{assumption}[theorem]{Assumption}
\theoremstyle{definition}
\newtheorem*{example*}{Example}
\newtheorem{remark}[theorem]{Remark}
\numberwithin{equation}{section}
\newcommand*{\Borel}{\mathfrak{B}}
\newcommand*{\Measure}{\mathfrak{M}}
\newcommand{\cbound}{\underbar{\textup{c}}}
\newcommand{\Prob}{\mathfrak{P}}
\newcommand{\cost}{c_{\textup{d}}}
\newcommand{\marg}{\mu_2^{\textup{d}}}
\begin{document}

\title[Bilevel Kantorovich Problem, Part II]{Bilevel Optimization of the 
Kantorovich Problem and its Quadratic Regularization\\
Part II: Convergence Analysis}
\thanks{This research was supported by the German Research Foundation (DFG) under grant 
number~LO 1436/9-1 within the priority program Non-smooth and Complementarity-based
Distributed Parameter Systems: Simulation and Hierarchical Optimization (SPP~1962).}

\author{Sebastian Hillbrecht} \address{ Sebastian Hillbrecht, Technische Universit\"at Dortmund, Fakult\"at f\"ur
  Mathematik, Lehrstuhl X, Vogelpothsweg 87, 44227 Dortmund, Germany}
\email{sebastian.hillbrecht@tu-dortmund.de}

\author{Paul Manns} \address{Paul Manns, Technische Universit\"at Dortmund, Fakult\"at f\"ur
  Mathematik, Lehrstuhl X, Vogelpothsweg 87, 44227 Dortmund, Germany}
\email{paul.manns@tu-dortmund.de}

\author{Christian Meyer} \address{Christian Meyer, Technische Universit\"at Dortmund, Fakult\"at f\"ur
  Mathematik, Lehrstuhl X, Vogelpothsweg 87, 44227 Dortmund, Germany}
\email{christian2.meyer@tu-dortmund.de}

\subjclass[2010]{49Q20, 90C08, 49J45} 
\date{\today} 
\keywords{Optimal transport, Kantorovich problem, bilevel optimization, quadratic regularization}

\begin{abstract} 
    This paper is concerned with an optimization 
    problem that is constrained by the Kantorovich
    optimal transportation problem. This bilevel
    optimization problem can be reformulated as a 
    mathematical problem with 
    complementarity constraints in the space of regular
    Borel measures.
    Because of the non-smoothness induced by the 
    complementarity relations, 
    problems of this type are frequently regularized.
    Here we apply a quadratic regularization of the 
    Kantorovich problem.
    As the title indicates,
    this is the second part 
    in a series of three papers.
    While the existence of optimal solutions to both
    the bilevel Kantorovich problem and its regularized 
    counterpart were shown in the first part, 
    this paper deals with the (weak-$\ast$) convergence
    of solutions to the regularized bilevel problem to 
    solutions of the original bilevel Kantorovich problem.
\end{abstract}

\maketitle


\section{Introduction}\label{sec:intro}

In this paper, we consider a bilevel optimization problem with the Kantorovich problem of 
optimal transport as its lower-level problem.
Given two non-negative marginals $\mu_1 \in \Measure(\Omega_1)$ and $\mu_2 \in \Measure(\Omega_2)$
on compact domains $\Omega_1$ and $\Omega_2$
with the same total mass and a continuous cost function
$c\in C(\Omega_1\times \Omega_2)$, 
the Kantorovich problem reads as follows:
\begin{equation}\tag{KP}\label{eq:KP}
    \left\{\quad
    \begin{aligned}
        \inf_{\pi} \quad & \KK(\pi) \coloneqq \int_\Omega c(x)\, \d \pi(x)\\
        \text{s.t.} \quad & \pi \in \Pi(\mu_1,\mu_2), \quad \pi \geq 0.
    \end{aligned}
    \quad \right.
\end{equation}
Herein, $\Pi(\mu_1, \mu_2)$ 
denotes the set of feasible couplings of $\mu_1$ and $\mu_2$ defined by
\begin{equation}
    \Pi(\mu_1, \mu_2) := \{\pi \in \Measure(\Omega_1 \times \Omega_2) :  
    {P_i}_\# \pi = \mu_i, \; i=1,2\},
\end{equation}
where $P_i$ denotes the projection on the $i$-th variable and ${P_i}_\# \pi$ is the push forward of 
$P_i$ w.r.t.\ $\pi$. 
The Kantorovich problem is a classical, well-established model for optimizing transportation processes and
we exemplarily refer to \cite{Vil03, Vil09, AG13, San15} for more details on its mathematical background.

For the definition of the bilevel problem, 
the cost function is set to $c = \cost \in C(\Omega_1\times \Omega_2)$ and we fix the second marginal 
$\mu_2 = \marg$, i.e., both $\cost$ and $\marg$
are given data.
The upper-level optimization variables are $\pi$ and $\mu_1$ 
such that the overall bilevel problem is given by
\begin{equation}\tag{BK}\label{eq:BKfoo}
    \left\{\quad
    \begin{aligned}
        \inf_{\pi,\mu_1} \quad & \JJ(\pi, \mu_1) \\
        \text{s.t.} \quad & \mu_1\in\Measure(\Omega_1), \quad \pi \in \Measure(\Omega_1 \times \Omega_2), \\
        & \mu_1\geq 0,  \quad \|\mu_1\|_{\Measure(\Omega_1)} = \|\marg\|_{\Measure(\Omega_2)}, \\
        & \text{$\pi$ solves \eqref{eq:KP} with $\mu_2 = \marg$ and $c = \cost$}.
    \end{aligned}
    \right.        
\end{equation}
A potential application of such a bilevel problem could,
for instance, be the identification of the marginal $\mu_1$ 
based on measurements of the transportation process.
The upper-level objective would then read
$\JJ(\pi, \mu_1) = \JJ(\pi):= \|\pi - \pi^\textup{d}\|_{\Measure(D)}$, 
where $D\subset \Omega_1 \times \Omega_2$ is a given observation domain and $\pi^\textup{d} \in \Measure(D)$ 
denotes the measured transport plan in $D$. Another example in form of an optimal control problem 
that fits into the framework of \eqref{eq:BKfoo} will be given at the end of the paper.

For its numerical solution, the Kantorovich problem is frequently regularized in order to avoid 
the ``curse of dimensionality'' caused by the discretization of the transport plan 
on the product space $\Omega_1 \times \Omega_2$. A prominent example is the entropic regularization
(see e.g.\ \cite{CLM21} for a convergence analysis in function space), 
which leads to the well-known Sinkhorn algorithm, cf.\ \cite{Cut13, CP18}.
An alternative regularization approach was proposed in \cite{LMM21}, where the squared 
$L^2(\Omega_1\times\Omega_2)$-norm of $\pi$ is added to the objective of \eqref{eq:KP}, weighted with 
a regularization parameter $\gamma > 0$. The advantageous implications of this approach are similar 
to the ones caused by the entropic regularization. First, the regularized counterpart of \eqref{eq:KP} 
is a strictly convex problem so that its solution is unique. Moreover, the regularization leads to 
a substantial reduction of the dimension since the dual 
problem is equivalent to a (non-smooth) system of 
equations in $L^2(\Omega_1) \times L^2(\Omega_2)$ instead of $L^2(\Omega_1\times \Omega_2)$. 
In \cite{LMM21}, a semi-smooth Newton-type method is employed to solve this system and 
the convergence for $\gamma \searrow 0$ (in a more general framework covering the quadratic regularization) 
is investigated in \cite{ML21}.
A further advantage (in comparison to the entropic regularization) of the quadratic regularization is that
it better preserves the sparsity pattern of the optimal transport plan as the numerical experiments in 
\cite{LMM21} demonstrate.
 
In view of the success of regularization techniques for the 
(numerical) solution of the Kantorovich problem, 
it is reasonable to apply them in the bilevel context too.
As the title indicates, we follow the quadratic approach from 
\cite{LMM21} and replace the Kantorovich problem in \eqref{eq:BKfoo} by its quadratically regularized
counterpart. In this context, the following questions 
naturally arise:
\begin{itemize}
    \item Does the bilevel problem \eqref{eq:BK} and its regularized counterpart admit (globally optimal)
    solutions?
    \item Do solutions of the regularized bilevel problems (or subsequences thereof) converge to solutions 
    of \eqref{eq:BK} for vanishing
    regularization parameter $\gamma \searrow 0$?
    \item How can we efficiently solve (discretized versions 
    of) the regularized bilevel problems?
\end{itemize}
While the first question is addressed in the predecessor paper \cite{kant1}, 
the present paper investigates the convergence behavior of such optimal solutions for $\gamma\searrow 0$.
So far, we are only able to show that (subsequences of) optimal solutions converge (weakly-$\ast$) 
to optimal solutions of the original problem \eqref{eq:BK} under fairly restrictive assumptions on 
the data $\marg$ and the structure of the objective. Nevertheless, at the end of the paper, we will see 
that there are relevant examples, where these assumptions are fulfilled.
At least in finite dimensions, these assumptions can be weakened, as we show in the third part of 
this series of papers, see~\cite{kant3}.
The third question, concerning an efficient and robust 
numerical solution of the regularized bilevel problems,
is subject to future research. Albeit regularized, the bilevel problems are still non-smooth, as the
necessary and sufficient optimality conditions associated 
with the regularized counterpart to \eqref{eq:KP}
involve the $\max$-operator, see~\cite[Theorem~2.11]{LMM21}. 
At the same time, this operator promotes the desired sparsity 
of the solution and for this reason, 
a further smoothing of the $\max$-operator should be avoided. 
We expect that algorithms, which are tailored to bilevel problems with non-smooth lower-level problem, behave
well in this setting, see, for instance, the approaches
in \cite{HS16, HU19, CdlRM20}.

The remainder of this paper is organized as follows: 
After introducing some basic notation and assumptions in the rest of this introduction, 
we collect some known results on the Kantorovich problem, its quadratic regularization
as well as the existence results from the companion paper \cite{kant1} in Section~\ref{sec:prelim}. 
The remaining part of the paper is then devoted to the convergence analysis for vanishing regularization
parameter $\gamma \searrow 0$.
First, in Section~\ref{sec:feas}, we show that weak-$\ast$ accumulation points of solutions of 
the regularized bilevel problems are feasible for \eqref{eq:BK}, i.e., in particular, 
the limit of the sequence of transport plans solves the Kantorovich problem associated with the limit of 
the marginals. Afterwards, in Section~\ref{sec:brenier}, we establish the optimality of the weak-$\ast$ limit 
under additional assumptions.
The paper ends with two application-driven examples,
where the additional assumptions are fulfilled.

\subsection{Notation and Standing Assumptions}\label{sec:assu}

Throughout the paper, the Euclidean norm of a vector
$a\in \R^n$, $n\in \N$, is denoted by $|a|$.
Moreover, the open ball in $\R^n$ of radius $r>0$ centered in $a$ is denoted by $B(a,r)$.
	
\subsubsection*{Domains}
For $d_1,d_2\in\N$, let $\Omega_1\subset\R^{d_1}$ and $\Omega_2\subset\R^{d_2}$ 
be compact sets with non-empty interior.
 We moreover suppose that 
their Cartesian product $\Omega\coloneqq\Omega_1\times\Omega_2$ 
coincides with the closure of its interior and has a Lipschitz boundary in the sense of \cite[Def.~1.2.2.1]{Gri85}.
The dimension of $\Omega$ is $d := d_1 + d_2$.
By $\Borel(\Omega)$ we denote the Borel $\sigma$-algebra on $\Omega$ 
and $\lambda$ is the Lebesgue measure on $\Borel(\Omega)$.
For $\Omega_1$ and $\Omega_2$, $\Borel(\Omega_i)$ and $\lambda_i$, $i=1,2$, are defined analogously 
so that $\lambda = \lambda_1 \otimes \lambda_2$.
Furthermore, we abbreviate $|\Omega_1|\coloneqq\lambda_1(\Omega_1)$, 
$|\Omega_2|\coloneqq\lambda_2(\Omega_2)$, and $|\Omega|\coloneqq\lambda(\Omega)$.

\subsubsection*{Marginals}
Let $(X, \Borel(X))$ be a measurable space. We denote by $\Measure(X)$ the space of 
(signed) regular Borel measures on $X$ equipped with the total variation norm, i.e., 
$\|\mu\|_{\Measure(X)} \coloneqq |\mu|(X)$.
By $P_i\colon\Omega_1 \times \Omega_2\ni(x_1,x_2)\mapsto x_i\in\Omega_i$, $i=1,2$, 
we denote the projection on the $i$-th variable. For the ease of notation,
we will denote projections with different domains and ranges by the same symbol, i.e.,
e.g., $P_2 : \Omega_1 \times \Omega_1 \ni (x_1, y_1) \mapsto y_1 \in \Omega_1$.
The respective domains and ranges will become clear from the context. 
If $\mu_1\in\Measure(\Omega_1)$ and $\mu_2\in\Measure(\Omega_2)$,
the set of transport plans between the marginals $\mu_1$ and $\mu_2$ is given by
\begin{equation}\label{eq:Pi}
    \Pi(\mu_1,\mu_2) \coloneqq \{\pi\in\Measure(\Omega)\colon {P_1}_\#\pi=\mu_1~\text{and}~{P_2}_\#\pi=\mu_2\},
\end{equation}
with the pushforward measure of $\pi$ via the projection $P_i$, $i=1,2$, being defined by
\begin{equation*}
    {P_i}_\#\pi\coloneqq\pi\circ P_i^{-1}\colon\Borel(\Omega_i)\to\R.
\end{equation*}		
Elements from $\Pi(\mu_1, \mu_2)$ are frequently called \emph{couplings} of $\mu_1$ and $\mu_2$.
Note that $\Pi(\mu_1,\mu_2)=\emptyset$ if $\mu_1(\Omega_1)\neq\mu_2(\Omega_2)$.
Throughout the paper, $\marg \in \Measure(\Omega_2)$ is a fixed marginal satisfying 
$\marg \geq 0$ and, in order to ease notation, $|\marg|(\Omega_2)  = 1$. 
The normalization condition is no restriction and can be ensured by re-scaling.
In order to shorten the notation, we write 
$\Prob(\Omega_i) := \{\mu \in \Measure(\Omega_i) : \mu \geq 0,\; |\mu|(\Omega_i) = 1\}$, $i=1,2$,
for the set of probability measures on $\Omega_i$.

Since $\Omega_1$, $\Omega_2$, and $\Omega$ are compact, 
the pre-dual spaces of $\Measure(\Omega_1)$, $\Measure(\Omega_2)$,  and $\Measure(\Omega)$
are $C(\Omega_1)$, $C(\Omega_2)$,  and $C(\Omega)$, respectively. 
We denote the associated dual pairings by $\dual{\mu}{v}$ and it will become clear from the context, which 
domain this refers to.

Given a measure space $(X, \AA, \mu)$, the Lebesgue space of $p$-times integrable functions is denoted 
by $L^p(X, \mu)$, $p\in [1, \infty)$. If $X\subset \R^n$, $n\in \N$, is a Lebesgue measurable set,
$\mathcal{A} = \Borel(X)$, and 
$\mu$ is the Lebesgue measure, we write $L^p(X)$.

\subsubsection*{Cost Function}
The cost function is assumed to satisfy $\cost \in W^{1,p}(\Omega)$, $p > d $, 
where, with a slight abuse of notation, $W^{1,p}(\Omega)$ denotes the Sobolev space on $\interior(\Omega)$. 
Note that, due to the regularity of $\partial\Omega$, $W^{1,p}(\Omega)$ is compactly embedded in 
$C(\Omega)$, cf.\ e.g.\ \cite[Theorem 6.3]{AF03}. 
Thus, there exists a continuous representative of $\cost$, which we denote by the same symbol.

\subsubsection*{Bilevel Objective}
The functional $\JJ \colon \Measure(\Omega) \times \Measure(\Omega_1)\to\R$ 
is supposed to be lower semicontinuous w.r.t.\ weak-$\ast$ convergence and bounded on bounded sets, i.e., 
for every $M > 0$, it holds that 
\begin{equation}
    \sup \{ |\JJ(\pi, \mu_1)|\colon\|(\pi, \mu_1)\|_{\Measure(\Omega) \times \Measure(\Omega_1)} \leq M \} < \infty.
\end{equation}
At the very end of the paper, several examples for objectives fulfilling these assumptions will be given.

\section{Preliminaries and Known Results}\label{sec:prelim}
In comparison to the bilevel problem discussed in \cite{kant1}, we consider a slightly different problem 
involving an additional constraint on the distance of $\supp(\mu_1)$ to the boundary $\partial\Omega_1$. 
This constraint is needed to ensure the weak-$\ast$ convergence of the mollified marginal, but can be avoided by 
passing on to an equivalent problem on an enlarged domain, see Lemma~\ref{lem:BKDelta} below. 
Our bilevel problem including the additional constraint
with distance parameter $\rho > 0$ reads as follows:
\begin{equation}\tag{BK}\label{eq:BK}
    \left\{\quad
    \begin{aligned}
        \inf_{\pi,\mu_1} \quad & \JJ(\pi, \mu_1) \\
        \text{s.t.} \quad & \mu_1\in \Prob(\Omega_1), \quad \dist(\supp(\mu_1), \partial\Omega_1) \geq \rho, \\
        & \pi \in \argmin\left\{\int_\Omega \cost\,\d\varphi
        \colon\varphi\in\Pi(\mu_1,\marg), \,\varphi\geq 0\right\}.
    \end{aligned}
    \right.        
\end{equation}
In the following, we tacitly assume that
$\rho > 0$ such that the feasible set of
\eqref{eq:BK} is non-empty.
Let us again mention that the lower-level problem only admits solutions provided that $\mu_1$ is a probability measure like $\marg$. 
To show the existence of a solution to this bilevel problem, we need the following result:

\begin{lemma}\label{lem:MMclosed}
    The set 
    \begin{equation}\label{eq:defsetM}
        \MM := \{ \mu_1 \in \Measure(\Omega_1) : \mu_1\geq 0, \; \dist(\supp(\mu_1), \partial\Omega_1) \geq \rho \}
    \end{equation}
    is closed w.r.t.\ weak-$\ast$ convergence. 
\end{lemma}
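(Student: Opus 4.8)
The plan is to pick an arbitrary sequence $(\mu_n)_{n\in\N}\subseteq\MM$ that converges weakly-$\ast$ to some $\mu\in\Measure(\Omega_1)$ and to verify that $\mu$ again satisfies the two conditions defining $\MM$, namely $\mu\geq 0$ and $\dist(\supp(\mu),\partial\Omega_1)\geq\rho$.

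Non-negativity is the easy part: for every $v\in C(\Omega_1)$ with $v\geq 0$ one has $\dual{\mu_n}{v}=\int_{\Omega_1}v\,\d\mu_n\geq 0$ since $\mu_n\geq 0$, and passing to the weak-$\ast$ limit yields $\dual{\mu}{v}\geq 0$; as $\Omega_1$ is a compact metric space whose space of regular Borel measures has predual $C(\Omega_1)$, this already implies $\mu\geq 0$.

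For the support condition the idea is to encode the ``forbidden'' open collar $U:=\{x\in\Omega_1\colon\dist(x,\partial\Omega_1)<\rho\}$ by a single continuous test function. First observe that, by definition of the distance between two sets, $\dist(\supp(\nu),\partial\Omega_1)\geq\rho$ is equivalent to $\supp(\nu)\subseteq\Omega_1\setminus U$. Now set
\[
    \phi(x):=\max\{0,\,\rho-\dist(x,\partial\Omega_1)\},
\]
which is continuous on $\Omega_1$ because $x\mapsto\dist(x,\partial\Omega_1)$ is $1$-Lipschitz, satisfies $\phi\geq 0$, and obeys $\{\phi>0\}=U$ as well as $\{\phi=0\}=\Omega_1\setminus U$. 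Since each $\mu_n$ lies in $\MM$, its support is contained in $\{\phi=0\}$, so $\dual{\mu_n}{\phi}=\int_{\Omega_1}\phi\,\d\mu_n=0$ for all $n$, and weak-$\ast$ convergence gives $\dual{\mu}{\phi}=0$. Using $\mu\geq 0$ and $\phi\geq 0$, we obtain for every $k\in\N$ the estimate $\mu(\{\phi\geq 1/k\})\leq k\,\dual{\mu}{\phi}=0$, hence $\mu(U)=\mu\bigl(\bigcup_{k\in\N}\{\phi\geq 1/k\}\bigr)=0$. Because $U$ is relatively open in $\Omega_1$, this forces $\supp(\mu)\cap U=\emptyset$, i.e.\ $\supp(\mu)\subseteq\Omega_1\setminus U$, which is precisely $\dist(\supp(\mu),\partial\Omega_1)\geq\rho$. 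Therefore $\mu\in\MM$, and $\MM$ is weak-$\ast$ closed.

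I do not anticipate a genuine obstacle here; the only point that needs a little care is the translation of the support/distance constraint into a statement about a single continuous function, together with the observation that in a weak-$\ast$ limit no mass can appear on an open set that every $\mu_n$ avoids. The latter is exactly what the choice of $\phi$ makes transparent; it is in essence the lower semicontinuity of $\nu\mapsto\nu(U)$ on open sets $U$ along weak-$\ast$ convergent sequences of non-negative measures, which one could alternatively invoke via the portmanteau theorem, but the explicit test-function argument above keeps the proof self-contained.
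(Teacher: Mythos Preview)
Your proof is correct and follows essentially the same strategy as the paper: both use a non-negative continuous test function that vanishes on the closed set $A=\{x:\dist(x,\partial\Omega_1)\geq\rho\}$ to show that no mass of the weak-$\ast$ limit can lie in the open collar $U=\Omega_1\setminus A$. The only cosmetic differences are that the paper argues by contradiction and invokes Urysohn's lemma to produce the test function near a hypothetical bad point, whereas you write down the single explicit function $\phi(x)=\max\{0,\rho-\dist(x,\partial\Omega_1)\}$ and argue directly that $\mu(U)=0$; your version is slightly more streamlined but the idea is the same.
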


\begin{proof}
    Let a sequence $\{\mu_1^n\}_{n\in\N} \subset \MM$ 
    be given such that $\mu_1^n \weakly^* \mu_1$ in $\Measure(\Omega_1)$. It is clear that 
    weak-$\ast$ convergence gives $\mu_1 \geq 0$. For the remaining claim, 
    we argue by contradiction and assume that there is an $x\in \supp(\mu_1)$ with
    \begin{equation*}
        x \notin A := \{ \xi \in \Omega_1 : \dist(\xi, \partial\Omega_1) \geq \rho\}.
    \end{equation*}
    Due to the Lipschitz continuity of the distance function, $A$ is closed and thus there is an $r > 0$ such that 
    $B(x, r) \cap A = \emptyset$. By Urysohn's lemma, there is thus a continuous function 
    $\phi \in C(\Omega_1; [0,1])$ such that $\phi \equiv 0$ on $A$ and $\phi \equiv 1$ on 
    $\overline{B(x, r/2)} \cap \Omega_1$. Since $x \in \supp(\mu_1)$, this yields the desired contradiction:
    \begin{equation*}
        0 < \mu_1(B(x, r/2)) 
        \leq \int_{\Omega_1} \phi(\xi)\, \d \mu_1(\xi) 
        = \lim_{n\to \infty} \int_{\Omega_1} \phi(\xi)\, \d \mu_1^n(\xi) = 0,
    \end{equation*}     
    where the last equality follows from $\phi \equiv 0$ on $A \supset \supp(\mu_1^n)$.
\end{proof}

\begin{proposition}\label{prop:existBK}
    Under our standing assumptions, the bilevel Kantorovich problem \eqref{eq:BK} admits at least one globally optimal solution.
\end{proposition}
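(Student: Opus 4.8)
The plan is to use the direct method of the calculus of variations. First I would take a minimizing sequence $\{(\pi^n, \mu_1^n)\}_{n \in \N}$ for \eqref{eq:BK}, i.e., feasible points with $\JJ(\pi^n, \mu_1^n) \to \inf$. Feasibility forces $\mu_1^n \in \Prob(\Omega_1)$, so $\|\mu_1^n\|_{\Measure(\Omega_1)} = 1$ is uniformly bounded. Since each $\pi^n$ solves the Kantorovich problem with marginals $(\mu_1^n, \marg)$, we have $\pi^n \in \Pi(\mu_1^n, \marg)$ and $\pi^n \geq 0$, hence $\|\pi^n\|_{\Measure(\Omega)} = \pi^n(\Omega) = \mu_1^n(\Omega_1) = 1$ as well. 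Thus the sequence is bounded in $\Measure(\Omega) \times \Measure(\Omega_1)$, and by the sequential Banach--Alaoglu theorem (applicable because $C(\Omega)$ and $C(\Omega_1)$ are separable) we may extract a subsequence, not relabeled, with $\pi^n \weakly^* \pi$ in $\Measure(\Omega)$ and $\mu_1^n \weakly^* \mu_1$ in $\Measure(\Omega_1)$.

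Next I would verify that the limit pair $(\pi, \mu_1)$ is feasible for \eqref{eq:BK}. Nonnegativity of $\mu_1$ and $\pi$ is preserved under weak-$\ast$ convergence, and testing with the constant function $1 \in C(\Omega_1)$ gives $\mu_1(\Omega_1) = \lim \mu_1^n(\Omega_1) = 1$, so $\mu_1 \in \Prob(\Omega_1)$; the support constraint $\dist(\supp(\mu_1), \partial\Omega_1) \geq \rho$ follows directly from Lemma~\ref{lem:MMclosed}. It remains to show that $\pi$ solves the lower-level Kantorovich problem for the marginals $(\mu_1, \marg)$. The marginal constraints pass to the limit: for $v \in C(\Omega_1)$, $\dual{{P_1}_\# \pi}{v} = \dual{\pi}{v \circ P_1} = \lim \dual{\pi^n}{v \circ P_1} = \lim \dual{\mu_1^n}{v} = \dual{\mu_1}{v}$, so ${P_1}_\# \pi = \mu_1$, and analogously ${P_2}_\# \pi = {P_2}_\# \pi^n = \marg$ for all $n$. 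Hence $\pi \in \Pi(\mu_1, \marg)$ with $\pi \geq 0$. For optimality within this feasible set, I would argue by comparison: since $\cost \in C(\Omega)$, the functional $\varphi \mapsto \int_\Omega \cost \, \d\varphi$ is weak-$\ast$ continuous, so $\int_\Omega \cost \, \d\pi = \lim \int_\Omega \cost \, \d\pi^n$. Given any competitor $\tilde\varphi \in \Pi(\mu_1, \marg)$ with $\tilde\varphi \geq 0$, I would need to produce a sequence $\tilde\varphi^n \in \Pi(\mu_1^n, \marg)$, $\tilde\varphi^n \geq 0$, with $\int_\Omega \cost\,\d\tilde\varphi^n \to \int_\Omega \cost\,\d\tilde\varphi$; then $\int_\Omega \cost\,\d\pi^n \le \int_\Omega \cost\,\d\tilde\varphi^n$ by optimality of $\pi^n$, and passing to the limit yields $\int_\Omega \cost\,\d\pi \le \int_\Omega \cost\,\d\tilde\varphi$. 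The construction of $\tilde\varphi^n$ is where I expect the main obstacle: one natural approach is gluing, writing $\tilde\varphi$ via disintegration against $\marg$ as $\tilde\varphi = \int \tilde\varphi_{x_2} \otimes \delta_{x_2}\,\d\marg(x_2)$ and building $\tilde\varphi^n$ from a coupling of $\mu_1^n$ with $\mu_1$ composed with $\tilde\varphi$; this is a standard but technical stability argument for optimal transport (cf.\ the references \cite{Vil09, San15}), and I suspect the paper instead invokes a ready-made continuity/$\Gamma$-convergence result for the Kantorovich value and the solution map, which would shortcut this step.

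Finally, with feasibility of $(\pi, \mu_1)$ established, weak-$\ast$ lower semicontinuity of $\JJ$ gives
\begin{equation*}
    \JJ(\pi, \mu_1) \le \liminf_{n\to\infty} \JJ(\pi^n, \mu_1^n) = \inf \eqref{eq:BK},
\end{equation*}
and since $(\pi, \mu_1)$ is feasible, the reverse inequality is trivial, so $(\pi, \mu_1)$ is a global minimizer. Note also that the infimum is finite: it is bounded below because $\JJ$ is bounded on the bounded feasible set, and bounded above because the feasible set is non-empty (guaranteed by the standing assumption $\rho > 0$), so the minimizing sequence is well-defined. The only genuinely delicate point is the lower-level optimality transfer in the previous paragraph; everything else is a routine application of compactness and (semi)continuity.
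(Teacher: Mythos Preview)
Your proposal is correct and follows essentially the same route as the paper: the direct method, with the only substantive step being the stability of optimal transport plans under weak-$\ast$ convergence of the marginals. The paper simply outsources that step to \cite[Theorem~3.2]{kant1}, which in turn rests on Villani's stability result \cite[Theorem~5.20]{Vil09}, and then invokes Lemma~\ref{lem:MMclosed} to accommodate the extra support constraint---exactly as you do. The gluing construction you sketch for producing the competitor sequence $\tilde\varphi^n$ is not needed here because of that citation, but it is precisely the argument the paper carries out later as Lemma~\ref{lem:TransPlanApprox}, so your instinct about the underlying mechanism is spot on.
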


\begin{proof}
    In \cite[Theorem~3.2]{kant1}, the assertion is shown by means of the stability of transport plans according to 
    \cite[Theorem~5.20]{Vil09}
    for an analogous bilevel problem without the additional constraint $\dist(\supp(\mu_1), \partial\Omega_1) \geq \rho$. 
    Since the set $\MM$ from \eqref{eq:defsetM} is weakly-$\ast$ closed as seen in Lemma~\ref{lem:MMclosed}, 
    the proof of \cite[Theorem~3.2]{kant1} readily carries over to \eqref{eq:BK}.
\end{proof}

Since we do not only need to mollify $\mu_1$ but also $\marg$, we have to require an additional assumption 
on $\marg$ similar to the constraint on $\mu_1$ in \eqref{eq:BK}.

\begin{assumption}\label{assu:marg}
    There is a constant $\rho > 0$ such that $\dist(\supp(\marg), \partial\Omega_2) \geq \rho$.
\end{assumption}

As indicated above, due to the following elementary result, Assumption~\ref{assu:marg} 
as well as the additional constraint in \eqref{eq:BK} can be avoided.

\begin{lemma}\label{lem:BKDelta}
    Consider a bilevel Kantorovich problem of the form 
    \begin{equation}\label{eq:BK2}
    \left\{\quad
    \begin{aligned}
        \inf_{\pi,\mu_1} \quad & \JJ(\pi, \mu_1) \\
        \text{\textup{s.t.}} \quad & \mu_1\in \Prob(\Omega_1),  \quad
        \pi \in \argmin\left\{\int_\Omega \cost\,\d\varphi
        \colon\varphi\in\Pi(\mu_1,\marg), \,\varphi\geq 0\right\},
    \end{aligned}
    \right.        
    \end{equation}
    where $\marg \in \Prob(\Omega_2)$ need not necessarily satisfy Assumption~\ref{assu:marg}. If we define 
    $\Omega_i^\rho := \Omega_i + \overline{B(0,\rho)}$, $i = 1,2$, and 
    $\Omega^\rho := \Omega_1^\rho \times \Omega_2^\rho$, then \eqref{eq:BK2} is equivalent to 
    \begin{equation}\label{eq:BKDelta}
        \left\{\quad
        \begin{aligned}
            \inf_{\pi,\mu_1} \quad & \JJ(\pi|_\Omega, \mu_1|_{\Omega_1}) \\
            \text{\textup{s.t.}} \quad & \mu_1\in \Prob(\Omega_1^\rho), \quad 
            \dist(\supp(\mu_1), \partial\Omega_1^\rho) \geq \rho, \\
            & \pi \in \argmin\left\{\int_\Omega \cost\,\d\varphi
            \colon\varphi\in\Pi_\rho(\mu_1,\mu_2^\rho), \,\varphi\geq 0\right\} ,
        \end{aligned}
        \right.        
    \end{equation}
    where $\pi|_\Omega \in \Measure(\Omega)$ and $\mu_1|_{\Omega_1}\in \Measure(\Omega_1)$ 
    denote the restrictions of $\pi$ and $\mu_1$, while
    $\mu_2^\rho \in \Prob(\Omega_2^\rho)$ is the extension of $\marg$ to $\Omega_2^\rho$ by 
    zero, i.e., $\mu_2^\rho(B) := \marg(B \cap \Omega_2)$ for all $B\in \Borel(\Omega_2^\rho)$. 
    Moreover, we set
    \begin{equation*}
    \begin{aligned}
        \Pi_\rho(\mu_1,\mu_2^\rho) 
        \coloneqq \{\pi\in\Measure(\Omega^\rho)\colon 
         & \pi(A\times \Omega_2^\rho) = \mu_1(A) \;\; \forall\, A \in \Borel(\Omega_1^\rho),\\           
         & \pi(\Omega_1^\rho \times B) = \mu_2^\rho(B) \;\; \forall\, B \in \Borel(\Omega_2^\rho)\}.
    \end{aligned}
    \end{equation*}
    The above equivalence means that, if $\mu_1$ and $\pi$ 
    solve \eqref{eq:BK2}, their extensions by zero, 
    denoted by $\mu_1^\rho$ and $\pi^\rho$, also solve \eqref{eq:BKDelta}, whereas, 
    if $\mu_1$ and $\pi$ solve \eqref{eq:BKDelta}, their restrictions $\mu_1|_{\Omega_1}$
    and $\pi|_\Omega$ are solutions of \eqref{eq:BK2}, each time with the same optimal value.
\end{lemma}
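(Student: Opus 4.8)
The plan is to establish a bijective correspondence between feasible points of \eqref{eq:BK2} and those of \eqref{eq:BKDelta} that preserves the objective value, mapping a measure to its extension by zero and back via restriction. The key observation is that the constraint $\dist(\supp(\mu_1),\partial\Omega_1^\rho)\geq\rho$ in \eqref{eq:BKDelta} forces $\supp(\mu_1)\subset\Omega_1$ (since every point of $\Omega_1^\rho$ within distance $\rho$ of $\partial\Omega_1^\rho$ includes all of $\Omega_1^\rho\setminus\Omega_1$), so that any feasible $\mu_1$ for \eqref{eq:BKDelta} is in fact supported on $\Omega_1$ and hence arises as the extension of its restriction $\mu_1|_{\Omega_1}\in\Prob(\Omega_1)$. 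Conversely, any $\mu_1\in\Prob(\Omega_1)$ extended by zero to $\Omega_1^\rho$ obviously satisfies both the probability constraint and the distance constraint.

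First I would make precise the geometric claim: for $\xi\in\Omega_1^\rho$ with $\dist(\xi,\partial\Omega_1^\rho)\geq\rho$ we have $\xi\in\Omega_1$. Indeed, if $\xi\notin\Omega_1$, then since $\Omega_1$ is closed there is a nearest point $z\in\Omega_1$ with $|\xi-z|>0$; moving from $z$ away from $\xi$ (or using that $\Omega_1^\rho$ is exactly the $\rho$-neighborhood of $\Omega_1$) one locates a boundary point of $\Omega_1^\rho$ within distance strictly less than $\rho$ of $\xi$, a contradiction. This yields $\supp(\mu_1)\subset\Omega_1$. Second, I would verify that the transport-plan constraint sets correspond: if $\pi\in\Pi(\mu_1,\marg)$ with both marginals supported in the smaller domains, then $\pi$ is supported in $\Omega=\Omega_1\times\Omega_2$, its extension by zero $\pi^\rho$ lies in $\Pi_\rho(\mu_1^\rho,\mu_2^\rho)$, and conversely any $\pi\in\Pi_\rho(\mu_1^\rho,\mu_2^\rho)$ has first marginal $\mu_1^\rho$ supported in $\Omega_1$ and second marginal $\mu_2^\rho$ supported in $\Omega_2$, hence $\pi$ itself is supported in $\Omega$ and restricts to an element of $\Pi(\mu_1,\marg)$. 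Since $\cost$ is only integrated over $\Omega$ in \eqref{eq:BKDelta}, the lower-level objective values coincide under this correspondence, so the respective $\argmin$ sets are in bijection as well.

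Having matched feasible sets and lower-level solution sets, I would conclude by noting that the upper-level objective in \eqref{eq:BKDelta} is precisely $\JJ(\pi|_\Omega,\mu_1|_{\Omega_1})=\JJ(\pi,\marg$-part restricted$)$ evaluated at the restriction, which equals $\JJ$ evaluated at the \eqref{eq:BK2}-feasible pair. Therefore the two problems have the same infimum, and the stated extension/restriction maps carry global minimizers to global minimizers in both directions. I would phrase the final argument as: given a solution $(\pi,\mu_1)$ of \eqref{eq:BK2}, its zero-extension $(\pi^\rho,\mu_1^\rho)$ is feasible for \eqref{eq:BKDelta} with the same objective value, and no \eqref{eq:BKDelta}-feasible point can do better, because restricting it would yield a \eqref{eq:BK2}-feasible point with that smaller value; the reverse direction is symmetric.

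The main obstacle I anticipate is purely bookkeeping rather than conceptual: one must be careful that the push-forward characterization of $\Pi(\mu_1,\marg)$ (via ${P_i}_\#\pi$) and the set-theoretic characterization of $\Pi_\rho(\mu_1,\mu_2^\rho)$ (via $\pi(A\times\Omega_2^\rho)=\mu_1(A)$) really say the same thing once one knows the supports lie in the smaller domains — in particular that a measure on $\Omega^\rho$ whose marginals vanish outside $\Omega_1$ and $\Omega_2$ respectively must vanish outside $\Omega$. This follows from the elementary fact that if ${P_1}_\#\pi$ is concentrated on $\Omega_1$ then $\pi$ is concentrated on $\Omega_1\times\Omega_2^\rho$, and similarly for the second coordinate, so that $\pi$ is concentrated on the intersection $\Omega$. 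Beyond this, the identification of the distance constraint with the support condition $\supp(\mu_1)\subset\Omega_1$ is the one genuinely geometric point and deserves a clean one-line justification as above.
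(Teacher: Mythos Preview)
Your approach is essentially the paper's: set up a bijection between feasible pairs for \eqref{eq:BK2} and \eqref{eq:BKDelta} via extension by zero and restriction, check that lower-level objectives (hence lower-level solution sets) and upper-level objectives coincide under this correspondence, and conclude. You supply more detail than the paper's terse argument, in particular the observation that a coupling whose marginals are concentrated on $\Omega_1$ and $\Omega_2$ must itself be concentrated on $\Omega$.

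There is, however, a genuine issue, and it is precisely the step you single out as ``the one genuinely geometric point'': the claim that $\xi\in\Omega_1^\rho$ with $\dist(\xi,\partial\Omega_1^\rho)\geq\rho$ forces $\xi\in\Omega_1$. Your sketch (``moving from $z$ away from $\xi$'') tacitly assumes that the nearest point $z\in\Omega_1$ to $\xi$ remains nearest along the ray through $\xi$, which requires convexity of $\Omega_1$. For non-convex $\Omega_1$ the claim can fail outright: take $\Omega_1=\{x\in\R^2:1\leq|x|\leq2\}$ and $\rho=1$; then $\Omega_1^\rho=\{|x|\leq3\}$, so $\partial\Omega_1^\rho=\{|x|=3\}$, and the origin satisfies $\dist(0,\partial\Omega_1^\rho)=3\geq\rho$ yet $0\notin\Omega_1$. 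In that situation a \eqref{eq:BKDelta}-feasible $\mu_1$ need not be supported in $\Omega_1$, and the feasible-set bijection breaks down. The paper's proof takes this inclusion for granted as well (in the equality $\supp(\mu_1)\times\supp(\mu_2^\rho)=\supp(\mu_1|_{\Omega_1})\times\supp(\marg)$), so the gap is shared; but since you explicitly flag this step and offer a justification, you should be aware that the justification only goes through under an additional hypothesis such as convexity of $\Omega_1$.
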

\begin{proof}
    If $\mu_1$ and $\pi$ are feasible for \eqref{eq:BK2}, then 
    $\mu_1^\rho$ and $\pi^\rho$ satisfy $\dist(\supp(\mu_1^\rho), \partial\Omega_1^\rho) \geq \rho$
    and $\pi^\rho \in \Pi_\rho(\mu_1^\rho,\mu_2^\rho)$ by construction. Vice versa, if $\mu_1$ and $\pi$ are feasible for 
    \eqref{eq:BKDelta}, then $\mu_1|_{\Omega_1}$ and $\pi|_\Omega$ satisfy 
    $\pi|_\Omega \in \Pi(\mu_1|_{\Omega_1}, \marg)$, since $\pi \in\Pi_\rho(\mu_1,\mu_2^\rho)$ implies
    $\supp(\pi) \subset \supp(\mu_1) \times \supp(\mu_2^\rho) = \supp(\mu_1|_{\Omega_1}) \times \supp(\marg)$. 
    Therefore, because the objectives of the 
    lower-level problems in \eqref{eq:BK2} and 
    \eqref{eq:BKDelta} are the same, 
    the same holds for the feasible sets of
    \eqref{eq:BK2} and \eqref{eq:BKDelta}
    (after extension by zero and restriction,
    respectively). Since the upper-level objectives are 
    also identical, this gives the assertion. 
\end{proof}

Lemma~\ref{lem:BKDelta} shows that one can equivalently solve \eqref{eq:BKDelta} instead of \eqref{eq:BK2} 
and the former problem satisfies an assumption analogous to Assumption~\ref{assu:marg} and guarantees the additional 
constraint on the distance of $\supp(\mu_1)$ to the boundary. 
Without loss of generality, we can therefore assume Assumption~\ref{assu:marg} to hold and consider the modified problem 
\eqref{eq:BK} involving the additional constraint on $\supp(\mu_1)$. Consequently, we will tacitly take
Assumption~\ref{assu:marg} for granted in the rest of the paper.

\subsection{Quadratic Regularization}
We now turn to the quadratic regularization of \eqref{eq:BK}. Let us first introduce the regularized lower-level problem.
Given a regularization parameter $\gamma > 0$, 
two marginals $\mu_1 \in L^2(\Omega_1)$, $\mu_2 \in L^2(\Omega_2)$, and a cost function $c\in L^2(\Omega)$, 
we consider the following regularized counterpart to \eqref{eq:KP}:
\begin{equation}\tag{$\text{KP}_\gamma$}\label{eq:KPgam}
    \left\{\quad
    \begin{aligned}
        \inf_{\pi_\gamma} \quad & \KK_\gamma(\pi_\gamma) \coloneqq \int_\Omega c(x)\, \pi_\gamma(x)\, \d \lambda(x) 
        + \tfrac{\gamma}{2} \|\pi_\gamma\|_{L^2(\Omega)}^2 \\
        \text{s.t.} \quad & \pi_\gamma \in L^2(\Omega), \quad \pi_\gamma \geq 0 
        \quad\lambda\text{-a.e. in}~\Omega, \\
        & \int_{\Omega_2}\pi_\gamma(x_1, x_2) \d \lambda_2(x_2) 
        =\mu_1(x_1)\quad\lambda_1\text{-a.e.\ in}~\Omega_1, \\
		& \int_{\Omega_1}\pi_\gamma(x_1, x_2) \d \lambda_1(x_1) 
		= \mu_2(x_2)\quad\lambda_2\text{-a.e.\ in}~\Omega_2.
    \end{aligned}
    \right.
\end{equation}

\begin{lemma}[{\cite[Lemma~2.1]{LMM21}}]\label{lem:KPgamexist}
     Problem \eqref{eq:KPgam} admits a solution if and only if $\mu_i \geq 0$ $\lambda_i$-a.e.\ in $\Omega_i$, $i=1,2$, and 
    $\|\mu_1\|_{L^1(\Omega_1)} = \|\mu_2\|_{L^1(\Omega_2)}$. If a solution exists, then it is unique. 
\end{lemma}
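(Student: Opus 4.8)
The plan is to prove the two directions of the ``if and only if'' separately, and then establish uniqueness using strict convexity.

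\textbf{Necessity.} Suppose \eqref{eq:KPgam} admits a solution $\pi_\gamma \in L^2(\Omega)$. From the pointwise constraint $\pi_\gamma \geq 0$ $\lambda$-a.e.\ together with the first marginal constraint, I would integrate: for $\lambda_1$-a.e.\ $x_1 \in \Omega_1$ we have $\mu_1(x_1) = \int_{\Omega_2} \pi_\gamma(x_1,x_2)\,\d\lambda_2(x_2) \geq 0$, so $\mu_1 \geq 0$ $\lambda_1$-a.e.; the analogous argument gives $\mu_2 \geq 0$ $\lambda_2$-a.e. For the mass condition, integrate the first marginal constraint over $\Omega_1$ and the second over $\Omega_2$; by Tonelli's theorem (the integrand is nonnegative and in $L^1(\Omega)$ since $\pi_\gamma \in L^2(\Omega) \subset L^1(\Omega)$ on the bounded set $\Omega$), both equal $\int_\Omega \pi_\gamma\,\d\lambda$, hence $\|\mu_1\|_{L^1(\Omega_1)} = \|\mu_2\|_{L^1(\Omega_2)}$.

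\textbf{Sufficiency.} Assume $\mu_i \geq 0$ $\lambda_i$-a.e.\ and $\|\mu_1\|_{L^1(\Omega_1)} = \|\mu_2\|_{L^1(\Omega_2)} =: m$. If $m = 0$ then $\mu_1 = \mu_2 = 0$ and $\pi_\gamma = 0$ is feasible, so assume $m > 0$ and set $\pi_\gamma(x_1,x_2) := \mu_1(x_1)\mu_2(x_2)/m$, the normalized product (this is exactly the feasibility-checking candidate used for the unregularized problem). Since $\mu_i \in L^2(\Omega_i)$ and $\Omega_i$ is bounded, Fubini gives $\pi_\gamma \in L^2(\Omega)$ with $\|\pi_\gamma\|_{L^2(\Omega)} = \|\mu_1\|_{L^2}\|\mu_2\|_{L^2}/m$; nonnegativity is immediate, and the marginal constraints hold because $\int_{\Omega_2}\mu_2\,\d\lambda_2 = m$ and $\int_{\Omega_1}\mu_1\,\d\lambda_1 = m$. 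Thus the feasible set $F$ of \eqref{eq:KPgam} is nonempty. The objective $\KK_\gamma$ is continuous on $L^2(\Omega)$, and on $F$ it is bounded below: the linear term is bounded by $\|c\|_{L^2}\|\pi_\gamma\|_{L^2}$ while the quadratic term dominates, so $\KK_\gamma$ is coercive on $L^2(\Omega)$ and in particular over $F$. Since $F$ is a closed convex subset of the Hilbert space $L^2(\Omega)$ (it is an intersection of the closed convex cone $\{\pi_\gamma \geq 0\}$ with the closed affine subspaces defined by the two bounded linear marginal maps $L^2(\Omega)\to L^2(\Omega_i)$ — continuity of these maps follows from Cauchy–Schwarz on the bounded domains), the coercive, weakly lower semicontinuous, strictly convex functional $\KK_\gamma$ attains its infimum on $F$ via the direct method. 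This gives existence.

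\textbf{Uniqueness.} The map $\pi_\gamma \mapsto \tfrac{\gamma}{2}\|\pi_\gamma\|_{L^2(\Omega)}^2$ is strictly convex for $\gamma > 0$ and the linear term preserves convexity, so $\KK_\gamma$ is strictly convex on the convex set $F$; hence the minimizer, if it exists, is unique. I do not expect a genuine obstacle here — the whole argument is the standard Hilbert-space direct method plus Fubini bookkeeping on the marginals — but the one point requiring a little care is verifying that the marginal maps are well-defined and continuous from $L^2(\Omega)$ into $L^2(\Omega_i)$ (not merely $L^1$), which is where compactness/boundedness of $\Omega_1,\Omega_2$ enters and which makes $F$ closed in the $L^2$-topology. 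Since this is cited verbatim from \cite[Lemma~2.1]{LMM21}, I would simply refer to that reference rather than reproduce the details.
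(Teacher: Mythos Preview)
Your proof is correct and is the standard direct-method argument one expects: necessity from nonnegativity and Fubini, sufficiency by exhibiting the normalized product coupling and applying coercivity/strict convexity on a closed convex set, uniqueness from strict convexity. The paper itself does not give a proof at all---the lemma is stated with a citation to \cite[Lemma~2.1]{LMM21} and nothing more---so your final remark that one would simply refer to that reference is exactly what the paper does.
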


Thanks to the above lemma, we can define the solution operator to \eqref{eq:KPgam}:
\begin{align}
    & \SS_\gamma : L^2(\Omega) \times\MM_0(\Omega) \ni (c, \mu_1, \mu_2)
    \mapsto \pi_\gamma \in L^2(\Omega), \label{eq:defSgam}\\
    & \MM_0(\Omega) 
    \coloneqq \big\{ (\mu_1, \mu_2) \in L^2(\Omega_1) \times L^2(\Omega_2) \colon 
    \begin{aligned}[t]
        & \|\mu_1\|_{L^1(\Omega_1)} = \|\mu_2\|_{L^1(\Omega_2)}, \\[-0.5ex]
        & \; \mu_i \geq 0\text{ $\lambda_i$-a.e.\ in } \Omega_i, i=1,2 \big\}.
    \end{aligned}        
\end{align}
What is more, if there exist constants $\cbound > -\infty$ and $\delta > 0$ such that 
$c \geq \cbound$ $\lambda$-a.e.\ in $\Omega$ and $\mu_i \geq \delta$ $\lambda_i$-a.e.\ in $\Omega_i$, $i=1,2$, then 
the dual problem to \eqref{eq:KPgam} admits a solution, too, see \cite[Theorem~2.11]{LMM21} and Lemma~\ref{lem:quadreg} below. 
Similarly to the original Kantorovich problem in \eqref{eq:KP}, this dual problem leads to a significant reduction of 
the dimension, since it is an optimization problem in $L^2(\Omega_1) \times L^2(\Omega_2)$ rather than in $L^2(\Omega_1 \times \Omega_2)$.

In order to ensure the existence of solutions to \eqref{eq:KPgam} as well as the associated dual variables for two
given marginals $\mu_i\in \Measure(\Omega_i)$, $i = 1, 2$, we introduce the convolution and constant shifting operators
\begin{equation}\label{eq:defTT}
    \TT_i^\delta \colon \Measure(\Omega_i) \ni \mu_i \mapsto 
    \varphi_i^\delta \ast \mu_i + \frac{\delta}{|\Omega_i|} \in L^2(\Omega_i),
    \quad i = 1,2.
\end{equation}
Herein, $\delta > 0$ is a smoothing parameter, $\varphi_i^\delta \in C_c^\infty(\R^{d_i})$ denotes the (symmetric) standard mollifier with 
$\|\varphi_i^\delta\|_{L^1(\R^{d_i})} = 1$ and support in $\overline{B_i(0, \delta)} \subset \R^{d_i}$, $i=1,2$. 
As a corollary of the classical convergence result for convolution of measures on the whole space, see e.g.\ 
\cite[Theorem~4.2.2]{ABM06}, we obtain the following result. It illustrates the utility of Assumption~\ref{assu:marg} 
and the additional constraint on $\mu_1$ in \eqref{eq:BK}.

\begin{lemma}\label{lem:convolconv}
    Let $\Lambda \subset \R^d$, $d\in \N$, be compact and assume that sequences 
    $\{\mu_n\}_{n\in \N} \subset \Measure(\Lambda)$ and $\{\delta_n\}_{n\in\N} \subset \R^+$ are given 
    such that $\mu_n \weakly^* \mu$, $\delta_n \searrow 0$, and
    \begin{equation}\label{eq:distineq}
        \dist(\supp(\mu_n), \partial\Lambda) \geq \rho > 0 \quad \forall\, n\in \N.
    \end{equation}
    Then $\varphi^{\delta_n} \ast \mu_n \weak^\ast \mu$
    in $\Measure(\Lambda)$ as $n \to \infty$ .
\end{lemma}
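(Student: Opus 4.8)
The plan is to reduce the assertion to the classical convolution--convergence result on all of $\R^d$, the genuinely new ingredient being the distance condition \eqref{eq:distineq}, which is exactly what prevents the mollified measures from leaking out of $\Lambda$.

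\textbf{Step 1: the mollified measures lie in $\Measure(\Lambda)$.} Since $\delta_n\searrow0$ we may discard finitely many indices and assume $\delta_n<\rho$ for every $n$. If $x\in\supp(\varphi^{\delta_n}\ast\mu_n)\subset\supp(\varphi^{\delta_n})+\supp(\mu_n)$, pick $y\in\supp(\mu_n)$ with $|x-y|\le\delta_n$. The triangle inequality for the distance to $\partial\Lambda$ gives $\dist(x,\partial\Lambda)\ge\dist(y,\partial\Lambda)-|x-y|\ge\rho-\delta_n>0$, and, since $\Lambda$ is closed, $x\notin\Lambda$ would force the segment $[y,x]$ to meet $\partial\Lambda$ at a point at distance $\le\delta_n<\rho$ from $y\in\supp(\mu_n)$, contradicting \eqref{eq:distineq}; hence $x\in\Lambda$. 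Thus $\varphi^{\delta_n}\ast\mu_n$, viewed as an absolutely continuous measure, belongs to $\Measure(\Lambda)$, and $|\varphi^{\delta_n}\ast\mu_n|(\Lambda)\le\|\varphi^{\delta_n}\|_{L^1(\R^d)}\|\mu_n\|_{\Measure(\Lambda)}=\|\mu_n\|_{\Measure(\Lambda)}\le C$, where $C:=\sup_n\|\mu_n\|_{\Measure(\Lambda)}<\infty$ (weak-$\ast$ convergent sequences are bounded by the uniform boundedness principle).

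\textbf{Step 2: convergence via a smoothed test function.} Identify $\mu_n,\mu\in\Measure(\Lambda)$ with their extensions by zero in $\Measure(\R^d)$, fix $v\in C(\Lambda)$, and use Tietze's extension theorem to pick $\tilde v\in C_c(\R^d)$ with $\tilde v|_\Lambda=v$. Since all measures involved are supported in $\Lambda$, we have $\int_\Lambda v\,\d(\varphi^{\delta_n}\ast\mu_n)=\int_{\R^d}\tilde v\,\d(\varphi^{\delta_n}\ast\mu_n)$, and Fubini's theorem together with the symmetry of the mollifier yields $\int_{\R^d}\tilde v\,\d(\varphi^{\delta_n}\ast\mu_n)=\int_\Lambda(\varphi^{\delta_n}\ast\tilde v)\,\d\mu_n$ --- this is precisely the reduction underlying \cite[Theorem~4.2.2]{ABM06}, which could equivalently be invoked after Step~1. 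One then estimates
\[ \Bigl|\int_\Lambda(\varphi^{\delta_n}\ast\tilde v)\,\d\mu_n-\int_\Lambda v\,\d\mu\Bigr|\le\|\varphi^{\delta_n}\ast\tilde v-\tilde v\|_{C(\R^d)}\,\|\mu_n\|_{\Measure(\Lambda)}+\Bigl|\int_\Lambda v\,\d\mu_n-\int_\Lambda v\,\d\mu\Bigr|. \]
The first term tends to $0$ because $\varphi^{\delta_n}\ast\tilde v\to\tilde v$ uniformly on $\R^d$ (uniform continuity of the compactly supported $\tilde v$) while $\|\mu_n\|_{\Measure(\Lambda)}\le C$; the second tends to $0$ by $\mu_n\weakly^\ast\mu$ applied to $v\in C(\Lambda)$. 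As $v$ was arbitrary, $\varphi^{\delta_n}\ast\mu_n\weakly^\ast\mu$ in $\Measure(\Lambda)$.

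\textbf{Anticipated obstacle.} The only non-routine point is Step~1: I must combine \eqref{eq:distineq} with the closedness of $\Lambda$ to rule out that the smoothed measures escape $\Lambda$ --- in fact they remain at distance $\ge\rho-\delta_n$ from $\partial\Lambda$, which is also what will make this lemma useful in the subsequent mollification construction. Everything else is the standard interplay between the uniform convergence $\varphi^{\delta_n}\ast\tilde v\to\tilde v$ and the weak-$\ast$ convergence $\mu_n\weakly^\ast\mu$, together with the bookkeeping of the identification $\Measure(\Lambda)\hookrightarrow\Measure(\R^d)$.
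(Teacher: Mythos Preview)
Your proof is correct and follows essentially the same route as the paper: swap the convolution onto the test function via Fubini and the symmetry of the mollifier, then combine uniform convergence of the mollified test function with the weak-$\ast$ convergence $\mu_n\weakly^\ast\mu$. The only cosmetic difference is that the paper avoids the Tietze extension by restricting the $\mu_n$-integral to $\Lambda_\rho:=\{x\in\Lambda:\dist(x,\partial\Lambda)\ge\rho\}\supset\supp(\mu_n)$, where $\varphi^{\delta_n}\ast v$ is already well defined and converges uniformly; your Step~1 makes explicit the support containment that the paper leaves implicit.
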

\begin{proof}
    Let $v\in C(\Lambda)$ be arbitrary and denote $\Lambda_{\rho} := \{x\in \Lambda : \dist(x, \partial\Lambda) \geq \rho\}$. 
    Then Fubini's theorem along with \eqref{eq:distineq} yields
    \begin{equation*}
        \int_{\Lambda} v(x) (\varphi^{\delta_n} \ast \mu_n)(x)  \, \d x
        = \int_{\Lambda_\rho} (\varphi^{\delta_n} \ast v)(\xi) \, \d \mu_n(\xi) \to \int_{\Lambda} v(\xi) \d\mu(\xi),
    \end{equation*}
    where we used the uniform convergence of $\varphi^{\delta_n} \ast v$ on the 
    compact subset $\Lambda_\rho$ of $\interior(\Lambda)$.
\end{proof}

We note that the restrictions on
$\Omega$, non-empty interior and 
Lipschitz boundary, are not required for Lemma 
\ref{lem:convolconv} because $A_\rho \subset \interior(\Lambda)$ is
ensured by $\dist(\supp \mu_n,\partial \Lambda) \ge \rho$.

According to Lemma~\ref{lem:KPgamexist}, \eqref{eq:KPgam} only admits a solution if 
the total mass of the marginals is the same. In context of the bilevel problem \eqref{eq:BKgam} below, 
this is ensured for the smoothed marginals by Assumption~\ref{assu:marg} and the additional constraint on $\supp(\mu_1)$, provided that $\delta \leq \rho$.
Since these imply $\supp(\varphi_2^\delta \ast \marg) \subset \Omega_2$ and  
$\supp(\varphi_1^\delta \ast \mu_1) \subset \Omega_1$, respectively, we obtain for every $\mu_1 \in \Prob(\Omega_1)$ with 
$\dist(\supp(\mu_1), \partial\Omega_1) \geq \rho$ that
\begin{equation}\label{eq:equalmass}
\begin{aligned}
    \|\TT_1^\delta(\mu_1)\|_{L^1(\Omega_1)}
    &=  \int_{\Omega_1} \varphi_1^\delta \ast \mu_1 \,\d\lambda_1 + \delta \\[-1ex]
    &= \|\mu_1\|_{\Measure(\Omega_1)} + \delta
    = \|\marg\|_{\Measure(\Omega_2)} + \delta
    =  \| \TT_2^\delta(\marg) \|_{L^1(\Omega_2)}
\end{aligned}
\end{equation}
and consequently, \eqref{eq:KPgam} is well defined with the marginals $\TT_1^\delta(\mu_1)$ and 
$\TT_2^\delta(\marg)$.
We are now in the position to state the regularized version of \eqref{eq:BK}:
\begin{equation}\tag{BK$_{\gamma}^\delta$}\label{eq:BKgam}
    \left\{\quad
    \begin{aligned}
        \inf_{\pi_\gamma,\mu_1, c} \quad & \JJ_\gamma(\pi_\gamma, \mu_1, c) 
        \coloneqq \JJ(\pi_\gamma, \mu_1) + \tfrac{1}{p \gamma}\, \|c - \cost\|_{W^{1,p}(\Omega)}^p \\
        \text{s.t.} \quad & c\in W^{1,p}(\Omega), \quad
        \mu_1\in \Prob(\Omega_1), \quad \dist(\supp(\mu_1), \partial\Omega_1) \geq \rho, \\
        & \pi_\gamma 
        = \SS_\gamma\big( c, \TT_1^\delta(\mu_1), \TT_2^\delta(\marg) \big).
    \end{aligned}
    \right.        
\end{equation}
Here and in the following, with a slight abuse of notation, we denote the measure induced by the $L^2$-function $\pi_\gamma$ 
by means of the $L^2(\Omega)$-scalar product by the same symbol.
In comparison to \eqref{eq:BK}, we do not only replace the 
lower-level Kantorovich problem by its regularized 
counterpart, but also add the cost function $c$ to the set of optimization variables. 
This is motivated by the so-called reverse approximation property, which  
requires a set of optimization variables that is sufficiently rich as also observed,
e.g., in case of the optimization of perfect plasticity, 
see \cite{MW21}. For this reason, $c$ is treated as an 
additional optimization variable to have more flexibility 
at this point.
In the companion paper \cite{kant3}, this will be the essential tool to establish the 
reverse approximation property in finite dimensions.
The penalty term in the upper-level objective $\JJ_\gamma$ will ensure that, in the limit, $c$ equals the given 
costs $\cost$, see \eqref{eq:convcost} below.

\begin{proposition}\label{prop:BKgamexist}
    Let $\gamma > 0$ and $\delta \in (0,\rho]$ be fixed.
    There exists at least one globally optimal solution to the regularized bilevel Kantorovich problem \eqref{eq:BKgam}.
\end{proposition}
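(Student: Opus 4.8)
The plan is to apply the direct method of the calculus of variations to \eqref{eq:BKgam}. Fix $\gamma>0$ and $\delta\in(0,\rho]$. First I would check that the feasible set is non-empty: by our standing assumption on $\rho$, there is some $\mu_1\in\Prob(\Omega_1)$ with $\dist(\supp(\mu_1),\partial\Omega_1)\ge\rho$, and together with $c=\cost$ and $\pi_\gamma=\SS_\gamma(\cost,\TT_1^\delta(\mu_1),\TT_2^\delta(\marg))$ this yields a feasible point, where $\SS_\gamma$ is well defined by Lemma~\ref{lem:KPgamexist} and the mass-balance computation \eqref{eq:equalmass}. Hence the infimum $m$ is finite (the objective is real-valued, being the sum of $\JJ$, which is bounded on bounded sets, and a nonnegative penalty). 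Take a minimizing sequence $\{(\pi_\gamma^n,\mu_1^n,c^n)\}_{n\in\N}$ of feasible points with $\JJ_\gamma(\pi_\gamma^n,\mu_1^n,c^n)\to m$.

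Next I would extract convergent subsequences. Since $\tfrac1{p\gamma}\|c^n-\cost\|_{W^{1,p}(\Omega)}^p\le\JJ_\gamma(\pi_\gamma^n,\mu_1^n,c^n)+C$ is bounded, $\{c^n\}$ is bounded in $W^{1,p}(\Omega)$, so along a subsequence $c^n\rightharpoonup c$ in $W^{1,p}(\Omega)$ and hence $c^n\to c$ in $C(\Omega)$ by the compact embedding, in particular $c^n\to c$ in $L^2(\Omega)$. The measures $\mu_1^n$ lie in $\Prob(\Omega_1)$, which is bounded and weakly-$\ast$ sequentially compact in $\Measure(\Omega_1)$, so along a further subsequence $\mu_1^n\weakly^*\mu_1$ with $\mu_1\in\Prob(\Omega_1)$ (mass is preserved against the constant test function $1\in C(\Omega_1)$), and by Lemma~\ref{lem:MMclosed} the constraint $\dist(\supp(\mu_1),\partial\Omega_1)\ge\rho$ passes to the limit. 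The key point for the lower-level variable is that the \emph{smoothed} marginals converge strongly: by Lemma~\ref{lem:convolconv} applied to $\mu_1^n\weakly^*\mu_1$ with $\Lambda=\Omega_1$ and fixed $\delta$, we get $\varphi_1^\delta\ast\mu_1^n\weakly^*\varphi_1^\delta\ast\mu_1$ in $\Measure(\Omega_1)$; since these are all densities supported in $\Omega_1$ with the mollifier $\varphi_1^\delta$ fixed, one in fact obtains $\TT_1^\delta(\mu_1^n)\to\TT_1^\delta(\mu_1)$ in $L^2(\Omega_1)$ (the convolution by a fixed smooth kernel maps bounded sets of $\Measure(\Omega_1)$ into a bounded, equicontinuous, hence precompact, subset of $C(\overline{\Omega_1})\hookrightarrow L^2(\Omega_1)$, and the only possible limit is $\TT_1^\delta(\mu_1)$). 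The second marginal $\TT_2^\delta(\marg)$ is fixed throughout.

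The third step is passing to the limit in the state equation. I would invoke continuity of the regularized solution operator $\SS_\gamma$ with respect to strong $L^2$-convergence of $(c,\mu_1,\mu_2)$ on its domain $L^2(\Omega)\times\MM_0(\Omega)$ — this is exactly the stability statement proved in \cite{LMM21} (and recalled in the forthcoming Lemma~\ref{lem:quadreg}); note $(\TT_1^\delta(\mu_1^n),\TT_2^\delta(\marg))\in\MM_0(\Omega)$ by \eqref{eq:equalmass}, and the limit stays in $\MM_0(\Omega)$. Thus $\pi_\gamma^n=\SS_\gamma(c^n,\TT_1^\delta(\mu_1^n),\TT_2^\delta(\marg))\to\SS_\gamma(c,\TT_1^\delta(\mu_1),\TT_2^\delta(\marg))=:\pi_\gamma$ in $L^2(\Omega)$, which in particular implies $\pi_\gamma^n\weakly^*\pi_\gamma$ in $\Measure(\Omega)$ via the $L^2(\Omega)$-duality identification. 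Hence $(\pi_\gamma,\mu_1,c)$ is feasible for \eqref{eq:BKgam}. Finally, weak-$\ast$ lower semicontinuity of $\JJ$ gives $\JJ(\pi_\gamma,\mu_1)\le\liminf_n\JJ(\pi_\gamma^n,\mu_1^n)$, and weak lower semicontinuity of $c\mapsto\|c-\cost\|_{W^{1,p}(\Omega)}^p$ gives the same for the penalty term; adding these, $\JJ_\gamma(\pi_\gamma,\mu_1,c)\le\liminf_n\JJ_\gamma(\pi_\gamma^n,\mu_1^n,c^n)=m$, so $(\pi_\gamma,\mu_1,c)$ is a global minimizer.

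The main obstacle I anticipate is the strong $L^2$-convergence of the smoothed marginal $\TT_1^\delta(\mu_1^n)$: Lemma~\ref{lem:convolconv} delivers only weak-$\ast$ convergence in $\Measure(\Omega_1)$, whereas continuity of $\SS_\gamma$ requires the $L^2$-norm topology on the marginals. Upgrading to strong convergence uses that $\varphi_1^\delta$ is a fixed $C_c^\infty$ kernel with $\delta$ held constant along the minimizing sequence, so that $\{\varphi_1^\delta\ast\mu_1^n\}_n$ is uniformly bounded and equi-Lipschitz, hence relatively compact in $C(\overline{\Omega_1})$ and therefore in $L^2(\Omega_1)$; weak-$\ast$ convergence in $\Measure(\Omega_1)$ then identifies the unique accumulation point as $\varphi_1^\delta\ast\mu_1$. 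A secondary technical point is ensuring $\MM_0(\Omega)$ is closed under the relevant limit so that $\SS_\gamma$ is actually applicable at the limit — this follows from \eqref{eq:equalmass} together with the nonnegativity being preserved under $L^2$-convergence.
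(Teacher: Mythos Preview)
Your direct-method argument is sound and is presumably what the cited result \cite[Theorem~4.7]{kant1} actually carries out; the paper's own proof of Proposition~\ref{prop:BKgamexist} does not reprove existence but simply invokes that theorem and explains why the two modifications (the extra support constraint, handled via Lemma~\ref{lem:MMclosed}, and the equal-mass condition, handled via \eqref{eq:equalmass} in place of enlarging the domain) do not obstruct the argument. So in substance you are supplying the details the paper outsources.

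Two citation issues are worth tightening. First, Lemma~\ref{lem:convolconv} is stated for $\delta_n\searrow 0$ and concludes $\varphi^{\delta_n}\ast\mu_n\weakly^*\mu$, whereas you want the statement for \emph{fixed} $\delta$ with limit $\varphi^\delta\ast\mu$; the Fubini identity in its proof still gives exactly what you need, but you should not cite the lemma as written. Your Arzel\`a--Ascoli upgrade (equi-Lipschitz because $\varphi_1^\delta\in C_c^\infty$ is fixed) is the right way to get strong $L^2$-convergence of $\TT_1^\delta(\mu_1^n)$ and in fact makes the appeal to Lemma~\ref{lem:convolconv} unnecessary. Second, Lemma~\ref{lem:quadreg} records the optimality system of \eqref{eq:KPgam}, not continuity of $\SS_\gamma$; the $L^2$-stability of $\SS_\gamma$ you need is a separate statement (established in \cite{kant1}/\cite{LMM21}), so point to the correct reference or sketch the standard argument (strong $L^2$-convergence of data in a uniformly strictly convex quadratic program with closed linear constraints yields strong convergence of the unique minimizers). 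With these two references corrected, the proof is complete.
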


\begin{proof}
    The existence of solutions for a slightly different problem has been shown in \cite[Theorem~4.7]{kant1}, which differs from 
    \eqref{eq:BKgam} as follows:
    First, the bilevel problem in \cite{kant1} does not contain the additional constraint on $\supp(\mu_1)$, but,     
    similarly to the proof of Proposition~\ref{prop:existBK}, this constraint can easily be incorporated into the 
    existence proof using Lemma~\ref{lem:MMclosed}.
    Secondly, the bilevel problem is posed in $\Omega_1^\delta \times \Omega_2^\delta$ with 
    $\Omega_i^\delta := \Omega_i + \overline{B(0, \delta)}$, $i=1,2$. This ensures that the 
    marginals $\TT_1^\delta(\mu_1)$ and $\TT_2^\delta(\marg)$ have the same total mass. 
    In our case, however, this is guaranteed by Assumption~\ref{assu:marg} and the constraint on $\supp(\mu_1)$ together with $\delta \le \rho$,
    see~\eqref{eq:equalmass}. With the equality of the total mass of the marginals at hand,     
    the remaining part of the existence proof is then completely along the lines of \cite[Theorem~4.7]{kant1}.
\end{proof}
\begin{remark}\label{rem:BKexist_domain}
The restrictions on $\Omega$ other than compactness,
in particular the Lipschitz boundary, are only required
in order to ensure the (compact) embedding
$W^{1,p}(\Omega) \embed C(\Omega)$ that is
used in Proposition \ref{prop:existBK}
(\cite[Theorem~4.7]{kant1}).
If one can drop the term $\frac{1}{p\gamma}\|c - \cost\|_{W^{1,p}(\Omega)}^p$, all arguments up to this point
are valid under the assumption that
$\Omega_1$ and $\Omega_2$ are compact.
\end{remark}

\section{Convergence for Vanishing Regularization}

In the following, we investigate the behavior of optimal solutions of the regularized bilevel problem 
\eqref{eq:BKgam} for regularization parameters $\gamma$, $\delta$ tending to zero.
For this purpose, assume that we are given sequences of non-negative regularization and smoothing parameters
$\{\gamma_n\}_{n \in \N}$, $\{\delta_n\}_{n \in \N} \subset \R_+$ satisfying $\gamma_n, \delta_n \searrow 0$ as well as
\begin{equation}
	\label{eq:RegParamSeq}
	0<\delta_n\leq\rho\quad\text{for all}~n\in\N
	\quad\text{and}\quad
	\frac{\gamma_n}{\delta_n^{d}}
	\to 0
	~\text{as}~n\to\infty.
\end{equation}
The reason for the coupling of the
parameters $\delta_n$ and $\gamma_n$
will become clear in the proof of 
Proposition~\ref{prop:WeakLimFeasBilevel} below. To shorten the notation, we write (BK$_n$) instead of 
\hyperref[eq:BKgam]{(BK$_{\gamma_n}^{\delta_n}$)} for the regularized bilevel problem
associated with $\gamma_n$ and $\delta_n$.
Similarly, from now on, we equip all entities and variables that depend on either $\gamma_n$ or $\delta_n$ (or both) 
only with the index $n$, i.e., e.g.\ $\SS_n := \SS_{\gamma_n}$, 
$\TT_1^n := \TT_1^{\delta_n}$ and so on.
For each $n\in \N$, Proposition~\ref{prop:BKgamexist} ensures the existence of a solution 
$( \bar{\pi}_n, \bar{\mu}_1^n, \bar{c}_n )$ to \hyperref[eq:BKgam]{(BK$_n$)}.
Owing to the feasibility of $\bar{\mu}_1^n$, we find that $\|\bar{\mu}_1^n\|_{\Measure(\Omega_1)} = 1$. 
Moreover, the constraints in \eqref{eq:KPgam} imply
\begin{equation}
\begin{aligned}
\label{eq:PiBarEst}
	\|\bar{\pi}_n\|_{\Measure(\Omega)}
	&= \int_{\Omega_1} \int_{\Omega_2} \bar \pi_n \,\d\lambda_2 \,\d\lambda_1 \\
	&= \int_{\Omega_1} \TT_1^n(\bar\mu_1^n) \,\d\lambda_1
	= \|\varphi_1^n\|_{L^1(\R^{d_1})} \|\bar{\mu}_1^n\|_{\Measure(\Omega_1)} + \delta_n \leq 1 + \rho
\end{aligned}
\end{equation}
for all $n \in \N$, where we make use of $\bar \pi_n \geq 0$, \eqref{eq:equalmass},
and \eqref{eq:RegParamSeq}.
Hence, there is a subsequence (denoted by the same symbol to ease notation) such that
\begin{equation*}
	( \bar{\pi}_n, \bar{\mu}_1^n )
	\weakly^*
	( \bar{\pi}, \bar{\mu}_1 )
	\quad \text{in}~
	\Measure(\Omega) \times \Measure(\Omega_1)
	\quad \text{as } n \to \infty.
\end{equation*}
Furthermore, take an arbitrary, but fixed $\mu_1 \in \Prob(\Omega_1)$ and consider 
the regularized optimal transport plans 
$\pi_n = \SS_n ( \cost, \TT_1^n(\mu_1), \TT_2^n( \marg ) )$ for $n \in \N$.
Then, the triple $( \pi_n, \mu_1, \cost)$ is feasible for $(\hyperref[eq:BKgam]{\text{BK}_n})$ 
and $( \pi_n, \mu_1 )_{n \in \N}$ is bounded in $\Measure(\Omega) \times \Measure(\Omega_1)$, cf.\ \eqref{eq:PiBarEst}. 
The optimality of $( \bar{\pi}_n, \bar{\mu}_1^n, \bar{c}_n )$ for (\hyperref[eq:BKgam]{BK$_n$}) thus yields
\begin{equation*}
\begin{aligned}
	\|\bar c_n - \cost\|_{W^{1,p}(\Omega)}^p
	\leq p\,\gamma_n \big( \JJ( \pi_n, \mu_1 ) - \JJ( \bar{\pi}_n, \bar{\mu}_1^n ) \big)
	\leq \gamma_n\,C
\end{aligned}
\end{equation*}
for all $n \in \N$ with some constant $C > 0$, because $|\JJ|$ is bounded on bounded sets by assumption. Hence,
we obtain for the whole sequence $\{\bar c_n\}_{n\in \N}$ (and not just the subsequence) that
\begin{equation}\label{eq:convcost}
	\bar c_n \to \cost \quad \text{in}~W^{1,p}(\Omega) \text{ as } n \to \infty.
\end{equation}
Now that we have found an accumulation point $(\bar{\pi},\bar{\mu}_1,\cost)$ 
of the sequence of regularized solutions $\{( \bar{\pi}_n, \bar{\mu}_1^n, \bar{c}_n )\}_{n \in \N}$, 
we aim to show its optimality for the original bilevel Kantorovich problem \eqref{eq:BK}.
We start with the feasibility of $(\bar{\pi},\bar{\mu}_1)$ for \eqref{eq:BK} in the next subsection.


\subsection{Feasibility of the Limit Plan}\label{sec:feas}

\begin{lemma}\label{lem:WeakLimFeasKant}
    Let $\{( \pi_n, \mu_1^n, c_n )\}_{n \in \N} \subset \Measure(\Omega) \times \Measure(\Omega_1) \times W^{1,p}(\Omega)$ 
    be a sequence of feasible points for the regularized bilevel problems 
    \textnormal{(\hyperref[eq:BKgam]{BK$_n$})}, $n \in \N$. 
	If  $( \pi, \mu_1) \in \Measure(\Omega) \times \Measure(\Omega_1)$ is 
	a weak-$\ast$ accumulation point of $\{( \pi_n, \mu_1^n)\}$, then $\pi$ is a non-negative coupling between 
	$\mu_1$ and $\marg$, i.e., $\pi \in \Pi(\mu_1, \marg)$.
\end{lemma}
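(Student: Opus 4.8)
The goal is to show that a weak-$\ast$ accumulation point $(\pi,\mu_1)$ of feasible points for the regularized problems is a non-negative coupling of $\mu_1$ and $\marg$. There are three things to verify: (i) $\pi \geq 0$ as a measure; (ii) ${P_1}_\#\pi = \mu_1$; and (iii) ${P_2}_\#\pi = \marg$. Throughout, I pass to the subsequence (not relabeled) along which $(\pi_n,\mu_1^n) \weakly^* (\pi,\mu_1)$, and I keep in mind that, by the feasibility of each $\mu_1^n$ for (\hyperref[eq:BKgam]{BK$_n$}), we have $\mu_1^n \in \Prob(\Omega_1)$ with $\dist(\supp(\mu_1^n),\partial\Omega_1)\geq\rho$, so Lemma~\ref{lem:MMclosed} already gives $\mu_1 \geq 0$, and in fact $\|\mu_1\|_{\Measure(\Omega_1)}=1$ since $\mu_1^n \geq 0$ and testing against the constant function $1 \in C(\Omega_1)$ preserves total mass in the limit.

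For (i): the function $\pi_n = \SS_n(c_n, \TT_1^n(\mu_1^n), \TT_2^n(\marg))$ satisfies $\pi_n \geq 0$ $\lambda$-a.e.\ in $\Omega$ by definition of the regularized solution operator; hence for every non-negative $v \in C(\Omega)$ we have $\int_\Omega v\,\pi_n\,\d\lambda \geq 0$, and passing to the weak-$\ast$ limit yields $\dual{\pi}{v}\geq 0$ for all such $v$, i.e.\ $\pi \geq 0$. For (ii) and (iii): the idea is to test the marginal constraints of \eqref{eq:KPgam} against continuous functions pulled back through the projections and push the convolutions onto the test functions. Concretely, fix $v \in C(\Omega_1)$ and use $v\circ P_1 \in C(\Omega)$ as a test function against $\pi_n$. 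By Fubini and the marginal constraint $\int_{\Omega_2}\pi_n(x_1,x_2)\,\d\lambda_2(x_2) = \TT_1^n(\mu_1^n)(x_1)$, we get
\begin{equation*}
    \int_\Omega (v\circ P_1)\,\pi_n \,\d\lambda
    = \int_{\Omega_1} v(x_1)\,\TT_1^n(\mu_1^n)(x_1)\,\d\lambda_1(x_1)
    = \int_{\Omega_1} v\,\big(\varphi_1^n \ast \mu_1^n\big)\,\d\lambda_1 + \frac{\delta_n}{|\Omega_1|}\int_{\Omega_1} v\,\d\lambda_1.
\end{equation*}
The left-hand side converges to $\dual{\pi}{v\circ P_1} = \dual{{P_1}_\#\pi}{v}$ by weak-$\ast$ convergence of $\pi_n$. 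On the right-hand side, the last term vanishes because $\delta_n \to 0$, and the first term converges to $\dual{\mu_1}{v}$ by Lemma~\ref{lem:convolconv}, whose hypotheses are met: $\mu_1^n \weakly^* \mu_1$ in $\Measure(\Omega_1)$, $\delta_n \searrow 0$, and $\dist(\supp(\mu_1^n),\partial\Omega_1)\geq\rho$ for all $n$. Hence $\dual{{P_1}_\#\pi}{v} = \dual{\mu_1}{v}$ for all $v \in C(\Omega_1)$, i.e.\ ${P_1}_\#\pi = \mu_1$. The argument for ${P_2}_\#\pi = \marg$ is analogous, testing with $v\circ P_2$ for $v\in C(\Omega_2)$ and invoking Lemma~\ref{lem:convolconv} with the constant sequence $\marg$, which is legitimate thanks to Assumption~\ref{assu:marg}. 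Combining (i)–(iii) gives $\pi \in \Pi(\mu_1,\marg)$ with $\pi\geq 0$.

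The main point requiring care — rather than a genuine obstacle — is the justification of moving the mollifier from the measure onto the continuous test function and the uniform convergence it needs; this is exactly what Lemma~\ref{lem:convolconv} encapsulates, and the reason Assumption~\ref{assu:marg} and the support constraint on $\mu_1$ in \eqref{eq:BK} were imposed is precisely to make its distance hypothesis \eqref{eq:distineq} hold. Note that, in contrast to the later feasibility statement (Proposition~\ref{prop:WeakLimFeasBilevel}), this lemma makes no optimality claim and does not need the parameter coupling $\gamma_n/\delta_n^d \to 0$; the convergence $c_n \to \cost$ plays no role here either, so the cost functions $c_n$ may be left untouched. The only slightly delicate bookkeeping is to make sure the accumulation point is realized along a single subsequence for both components $\pi_n$ and $\mu_1^n$ simultaneously, which is immediate since we extract from the joint sequence $\{(\pi_n,\mu_1^n)\}$ in the product space.
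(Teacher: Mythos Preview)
Your proof is correct and follows essentially the same route as the paper: pass to the convergent subsequence, get $\pi\geq 0$ from the weak-$\ast$ limit, and verify the two marginal conditions by testing with $v\circ P_i$, using Fubini, the marginal constraints of \eqref{eq:KPgam}, and Lemma~\ref{lem:convolconv}. Your additional remarks (irrelevance of $c_n$ and of the coupling $\gamma_n/\delta_n^d\to 0$ here) are accurate and do not affect the argument.
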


\begin{proof}
    In order to avoid double subscripts, we assume w.l.o.g.\ that the whole sequence converges.
    The non-negativity of $\pi_n$ carries over to the weak-$\ast$ limit $\pi$. 
    It remains to show that $\pi$ is a coupling of $\mu_1$ and $\marg$. 
    For this purpose, let $\phi_1 \in C(\Omega_1)$ be arbitrary but fixed. Then, the equality constraints in \eqref{eq:KPgam} imply
    \begin{equation*}
    \begin{aligned}
        \dual{\pi}{\phi_1 \circ P_1} 
        & = \lim_{n\to\infty} \dual{\pi_n}{\phi_1\circ P_1} \\
        & = \lim_{n\to\infty} 
        \int_{\Omega_1}  \phi_1(x_1) \int_{\Omega_2^n} \pi_n(x_1,x_2)\,\d\lambda_2(x_2)\,\d\lambda_1(x_1) \\
        & = \lim_{n\to\infty} \int_{\Omega_1}  \phi_1(x_1) \,\TT_1^n(\mu_1^n)(x_1) \,\d\lambda_1(x_1) \\
        & = \lim_{n\to\infty}
        \int_{\Omega_1} \phi_1(x_1) (\varphi_1^n \ast \mu_1^n)(x_1)  \,\d\lambda_1(x_1) + \frac{\delta_n}{|\Omega_1|} 
        \int_{\Omega_1} \phi_1(x_1)\,\d\lambda_1(x_1) \\
        &= \dual{\mu_1}{\phi_1},
    \end{aligned}
    \end{equation*}
    where we use Lemma~\ref{lem:convolconv} and the additional constraint on $\supp(\mu_1)$ in \eqref{eq:KPgam} 
    for the passage to the limit.
    Since $\phi_1$ was arbitrary, this implies ${P_1}_\# \pi = \mu_1$ as desired.
    An analogous argument for an arbitrary $\phi_2 \in C(\Omega_2)$ 
    shows ${P_2}_\#{\pi} = \marg$.
\end{proof}

\begin{lemma}\label{lem:TransPlanApprox}
    Let $\{\mu_1^n\}_{n\in\N} \subset \Prob(\Omega_1)$ be given such that $\mu_1^n \weakly^* \mu_1$.
    Moreover, let $\pi \in \Pi( \mu_1, \marg )$ be a non-negative coupling between $\mu_1$ and $\marg$. 
    Then, there exists a sequence of non-negative couplings $\pi_n \in\Pi(\mu_1^n,\marg)$ 
    that converges weakly-$\ast$ to $\pi$.
\end{lemma}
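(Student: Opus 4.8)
The plan is to construct the couplings $\pi_n$ by a "gluing" argument. Since $\mu_1^n, \mu_1 \in \Prob(\Omega_1)$ with $\mu_1^n \weakly^* \mu_1$, I would first fix, for each $n$, some coupling $\sigma_n \in \Pi(\mu_1^n, \mu_1)$ of the new marginal with the old one — the product measure $\mu_1^n \otimes \mu_1$ normalized by the common total mass $1$ always works, but one could also use an optimal transport coupling if one wants quantitative control. Using the disintegration of $\sigma_n$ with respect to its second marginal $\mu_1$, write $\sigma_n = \int_{\Omega_1} \sigma_n^{y} \otimes \delta_y \, \d\mu_1(y)$ with $\sigma_n^y \in \Prob(\Omega_1)$ for $\mu_1$-a.e.\ $y$. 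Then, with the disintegration $\pi = \int_{\Omega_1} \pi^y \otimes \delta_y \, \d\mu_1(y)$ of the given coupling with respect to its first marginal $\mu_1$ (so $\pi^y \in \Prob(\Omega_2)$), I would \emph{define}
\begin{equation*}
    \pi_n \coloneqq \int_{\Omega_1} \sigma_n^{y} \otimes \pi^{y} \,\d\mu_1(y) \in \Measure(\Omega_1 \times \Omega_2).
\end{equation*}
One checks immediately that $\pi_n \geq 0$, that its second marginal is $\int \pi^y \, \d\mu_1(y) = \marg$, and that its first marginal is $\int \sigma_n^y \, \d\mu_1(y) = {P_1}_\# \sigma_n = \mu_1^n$; hence $\pi_n \in \Pi(\mu_1^n, \marg)$ as required.

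The remaining step is weak-$\ast$ convergence $\pi_n \weakly^* \pi$. Here I would test against $\phi \in C(\Omega_1 \times \Omega_2)$ and write, using the definition of $\pi_n$,
\begin{equation*}
    \dual{\pi_n}{\phi} = \int_{\Omega_1}\!\!\int_{\Omega_1}\!\!\int_{\Omega_2} \phi(x_1,x_2)\, \d\pi^y(x_2)\, \d\sigma_n^y(x_1)\, \d\mu_1(y),
\end{equation*}
and compare with $\dual{\pi}{\phi} = \int_{\Omega_1}\!\int_{\Omega_2} \phi(y,x_2)\, \d\pi^y(x_2)\, \d\mu_1(y)$. The difference is controlled by $\int_{\Omega_1}\!\int_{\Omega_1} \omega_\phi(|x_1 - y|)\, \d\sigma_n^y(x_1)\, \d\mu_1(y) = \int_{\Omega_1\times\Omega_1} \omega_\phi(|x_1 - y|)\, \d\sigma_n(x_1,y)$, where $\omega_\phi$ is a modulus of continuity of the uniformly continuous function $\phi$ on the compact set $\Omega$. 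So I need $\sigma_n$ to concentrate on the diagonal of $\Omega_1 \times \Omega_1$, i.e.\ $\int |x_1 - y|\, \d\sigma_n \to 0$, equivalently $W_1(\mu_1^n, \mu_1) \to 0$. This is the main obstacle: the product coupling $\mu_1^n \otimes \mu_1$ does \emph{not} achieve this, so I must instead take $\sigma_n$ to be a $W_1$-optimal (or near-optimal) coupling of $\mu_1^n$ and $\mu_1$, and invoke the fact that on a compact metric space weak-$\ast$ convergence of probability measures is equivalent to convergence in the Wasserstein distance $W_1$ (see e.g.\ \cite[Theorem~5.10]{Vil09} or \cite[Theorem~6.9]{Vil09}). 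With $W_1(\mu_1^n, \mu_1) \to 0$ in hand, the estimate above gives $|\dual{\pi_n}{\phi} - \dual{\pi}{\phi}| \leq \int_{\Omega_1\times\Omega_1}\omega_\phi(|x_1-y|)\,\d\sigma_n \to 0$ for every $\phi \in C(\Omega)$, completing the proof.

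An alternative, slightly more hands-on route that avoids explicit disintegration: since $\mu_1^n \weakly^* \mu_1$ on the compact set $\Omega_1$, one can cover $\Omega_1$ by finitely many small Borel cells $\{A_j^k\}_j$ of diameter $\leq 1/k$, approximate $\mu_1$ and $\mu_1^n$ by their "cell averages", and on each cell transport the restriction $\pi\llcorner(A_j^k \times \Omega_2)$ to a coupling with first marginal $\mu_1^n \llcorner A_j^k$ by rescaling in the first variable. Letting the mesh $1/k \to 0$ along a diagonal subsequence in $n$ again yields the claim. I would present the disintegration/Wasserstein argument as the cleaner one, but either works; the essential point in both is upgrading weak-$\ast$ convergence of the first marginals to a statement strong enough to move mass a vanishingly small distance, which is exactly what $W_1$-convergence provides on compact domains.
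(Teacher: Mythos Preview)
Your proposal is correct and follows essentially the same route as the paper: both construct $\pi_n$ by gluing the given coupling $\pi$ with a $W_1$-optimal coupling of $\mu_1^n$ and $\mu_1$ (you via explicit disintegration, the paper via the gluing lemma, which amounts to the same thing), and both reduce convergence to $W_1(\mu_1^n,\mu_1)\to 0$ using the equivalence of weak-$\ast$ and Wasserstein convergence on compact spaces. The only cosmetic difference is that the paper closes with the estimate $W_1(\pi_n,\pi)\le W_1(\mu_1^n,\mu_1)$ (obtained from an explicit coupling of $\pi_n$ and $\pi$) rather than your direct modulus-of-continuity bound.
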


\begin{proof}
    The proof relies on the gluing lemma in combination with the equivalence of  weak-$\ast$ convergence
    and convergence in the Wasserstein-1-distance
    on compact domains. 
    We first note that, according to \cite[Theorem 4.1]{Vil09}, for each $n\in\N$,
    there exists an optimal coupling $\theta_n\in\Pi(\mu_1^n, \mu_1)$ 
    between $\mu_1^n$ and $\mu_1$ with respect to the metric cost $|x_1-y_1|$ on
    $\Omega_1 \times \Omega_1$. 
    Furthermore, thanks to the gluing lemma, see, e.g., \cite[Lemma 5.5]{San15}, there exist non-negative measures 
    $\sigma_n\in\Measure(\Omega_1\times\Omega_1\times\Omega_2)$ such that 
    ${P_{12}}_\#{\sigma_n}=\theta_n$ and ${P_{23}}_\#{\sigma_n}=\pi$ for all $n\in \N$.
    Here and in the following, 
    \begin{equation*}
        P_{jk}\colon \Omega_1\times\Omega_1\times\Omega_2 \to \Omega_1 \times \Omega_\ell, 
        \quad j, k=1,2,3, \; j< k, \; 
        \ell = k-1
    \end{equation*}    
    refers to the projection onto the coordinates $j$ and $k$.
	Using this projection, we define
	\begin{equation*}
		\pi_n \coloneqq {P_{13}}_\#{\sigma_n}\in\Measure(\Omega_1\times\Omega_2).
	\end{equation*}
	Then, by construction, we obtain for all $B_1\in\Borel(\Omega_1)$
	\begin{align*}
		({P_1}_\#{\pi_n})(B_1) &= \sigma_n\big(P_{13}^{-1}(B_1 \times \Omega_2)\big) \\
		&= \sigma_n(B_1\times\Omega_1\times\Omega_2) \\
		&= \sigma_n\big(P_{12}^{-1}(B_1\times \Omega_1)\big) = ({P_1}_\#{\theta_n})(B_1) = \mu_1^n(B_1)
	\end{align*}
	and analogously, for all $B_2\in\Borel(\Omega_2)$,
	\begin{equation*}
		({P_2}_\#{\pi_n})(B_2) = \sigma_n(\Omega_1\times\Omega_1\times B_2) 
		= ({P_2}_\#{\pi})(B_2) = \marg(B_2)
	\end{equation*}
	so that $\pi_n\in\Pi(\mu_1^n,\marg)$ as desired. 
	Moreover, the non-negativity of $\pi_n$ directly follows from the non-negativity of $\sigma_n$.
	
	To show the weak-$\ast$ convergence, we borrow an argument from the proof of \cite[Theorem 3.1]{BP21}.
	For this purpose, define the mapping
	\begin{equation*}
		P_{1323}\colon\Omega_1\times\Omega_1\times\Omega_2
        \ni (x_1,y_1,x_2) \mapsto \big((x_1,x_2),(y_1,x_2)\big) \in \Omega\times\Omega,
	\end{equation*}
	as well as $\zeta \coloneqq {P_{1323}}_\#{\sigma_n}$. We observe that $\zeta\in\Measure(\Omega\times\Omega)$ and
	\begin{equation*}
		({P_1}_\#{\zeta})(B) = \zeta(B\times\Omega) 
		= \sigma_n\big(P_{1323}^{-1}(B\times\Omega)\big) = \sigma_n\big(P_{13}^{-1}(B)\big) = \pi_n(B)
	\end{equation*}
	as well as
	\begin{align*}
		\big({P_2}_\#{\zeta}\big)(B) = \zeta(\Omega\times B) 
		= \sigma_n\big(P_{1323}^{-1}(\Omega\times B)\big) = \sigma_n\big(P_{23}^{-1}(B)\big) = \pi(B)
	\end{align*}
	for all $B\in\Borel(\Omega)$ so that $\zeta\in\Pi(\pi_n,\pi)$. 
	Again, the non-negativity of $\zeta$ directly follows from the non-negativity of $\sigma_n$. 
    Now we have everything at hand to estimate the Wasserstein-1-distance of $\pi_n$ and $\pi$:
	\begin{align*}
		0\leq W_1(\pi_n,\pi) 
		&= \inf_{0\leq\theta\in\Pi(\pi_n,\pi)} \int_{\Omega\times\Omega}|x-y|\, \d{\theta(x,y)} \\
		&\leq \int_{\Omega\times\Omega} |x-y| \,\d{\zeta(x,y)} \\
		&\leq \int_{\Omega\times\Omega} |(x_1,x_2) - (y_1, y_2)|\,\d ({P_{1323}}_\#{\sigma_n})((x_1,x_2),(y_1,y_2))\\
		&=  \int_{\Omega_1\times\Omega_1\times\Omega_2} |x_1-y_1|\,\d{\sigma_n(x_1,y_1,x_2)} \\
		&= \int_{\Omega_1\times\Omega_1}|x_1-y_1|\,\d{({P_{12}}_\#{\sigma_n})(x_1,y_1)} \\
		&= \int_{\Omega_1\times\Omega_1}|x_1-y_1|\,\d{\theta_n} = W_1(\mu_1^n,\mu_1) \to 0
		\quad\text{as } n \to \infty,
	\end{align*}
	where we use $\mu_1^n \weakly^* \mu_1$ by assumption and the equivalence of 
	weak-$\ast$ convergence and convergence in the Wasserstein-1-distance on compact domains 
	according to \cite[Theorem 6.9]{Vil09}. Using this equivalence once more
	finally yields $\pi_n \weakly^*\pi $ in $\Measure(\Omega)$ as $n \to \infty$.
\end{proof}

\begin{proposition}\label{prop:WeakLimFeasBilevel}
    Assume that the vanishing sequences $\{\gamma_n\}_{n\in \N}$ and $\{\delta_n\}_{n\in\N}$ satisfy \eqref{eq:RegParamSeq}.
	Let $\{( \pi_n, \mu_1^n, c_n )\}_{n \in \N} \subset \Measure(\Omega) \times \Measure(\Omega_1) \times W^{1,p}(\Omega)$ 
	be a sequence of feasible points for the regularized bilevel problems \textnormal{(\hyperref[eq:BKgam]{BK$_n$})}, $n \in \N$. 
	If  $( \pi, \mu_1, \cost ) \in \Measure(\Omega) \times \Measure(\Omega_1) \times W^{1,p}(\Omega)$ is an 
	accumulation point of this sequence w.r.t.\ weak-$\ast$ convergence in $\Measure(\Omega) \times \Measure(\Omega_1)$ 
	and weak convergence in $W^{1,p}(\Omega)$, 
	then $( \pi, \mu_1 )$ is feasible for \eqref{eq:BK}, i.e., $\mu_1 \in \Prob(\Omega_1)$, 
    $\dist(\supp(\mu_1), \partial\Omega_1) \geq \rho$,	
	 and $\pi$ is optimal for \eqref{eq:KP} with respect to the marginals $\mu_1$ and $\marg$ 
	 as well as the cost function $\cost$.
\end{proposition}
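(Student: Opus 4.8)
The plan is to combine the two preceding lemmas with the classical stability of optimal transport plans. First I would dispose of the easy constraints: by Lemma~\ref{lem:MMclosed} the set $\MM$ is weak-$\ast$ closed, so the limit marginal $\mu_1$ inherits $\mu_1 \geq 0$ and $\dist(\supp(\mu_1), \partial\Omega_1) \geq \rho$ from the sequence $\{\mu_1^n\}$, while $\|\mu_1\|_{\Measure(\Omega_1)} = 1$ follows from testing the weak-$\ast$ convergence against the constant function $1 \in C(\Omega_1)$; hence $\mu_1 \in \Prob(\Omega_1)$. Next, Lemma~\ref{lem:WeakLimFeasKant} already tells us that $\pi \in \Pi(\mu_1, \marg)$ and $\pi \geq 0$, so $\pi$ is at least \emph{feasible} for \eqref{eq:KP} with marginals $\mu_1$, $\marg$; what remains is to show it is \emph{optimal}, i.e. $\KK(\pi) = \int_\Omega \cost \,\d\pi \leq \int_\Omega \cost\,\d\varphi$ for every non-negative $\varphi \in \Pi(\mu_1, \marg)$.

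For the optimality, I would argue by comparison. Fix an arbitrary non-negative competitor $\varphi \in \Pi(\mu_1, \marg)$. Apply Lemma~\ref{lem:TransPlanApprox} with the \emph{same} sequence $\mu_1^n \weakly^* \mu_1$ to obtain non-negative couplings $\varphi_n \in \Pi(\mu_1^n, \marg)$ with $\varphi_n \weakly^* \varphi$. Now mollify: the triple $(\SS_n(c_n, \TT_1^n(\mu_1^n), \TT_2^n(\marg)), \mu_1^n, c_n)$ being feasible for (\hyperref[eq:BKgam]{BK$_n$}) means $\pi_n = \SS_n(c_n, \TT_1^n(\mu_1^n), \TT_2^n(\marg))$ solves \eqref{eq:KPgam} with these smoothed marginals and cost $c_n$; comparing against the feasible competitor $\TT_1^n \otimes \ldots$— more precisely, against a suitable mollification $\tilde\varphi_n$ of $\varphi_n$ that is feasible for \eqref{eq:KPgam} with marginals $\TT_1^n(\mu_1^n), \TT_2^n(\marg)$ — one gets
\begin{equation*}
    \int_\Omega c_n\, \pi_n \,\d\lambda + \tfrac{\gamma_n}{2}\|\pi_n\|_{L^2(\Omega)}^2
    \;\leq\; \int_\Omega c_n\, \tilde\varphi_n \,\d\lambda + \tfrac{\gamma_n}{2}\|\tilde\varphi_n\|_{L^2(\Omega)}^2.
\end{equation*}
The key quantitative point is to control the regularization term $\tfrac{\gamma_n}{2}\|\tilde\varphi_n\|_{L^2(\Omega)}^2$: since $\tilde\varphi_n$ is a convolution of a measure of bounded total variation with the mollifier $\varphi^{\delta_n}$ on $\Omega \subset \R^d$, its $L^2$-norm squared is $O(\delta_n^{-d})$, so $\gamma_n \|\tilde\varphi_n\|_{L^2}^2 = O(\gamma_n/\delta_n^d) \to 0$ by the scaling hypothesis \eqref{eq:RegParamSeq}. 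This is exactly where the coupling of $\gamma_n$ and $\delta_n$ enters, as the text foreshadows. Dropping the non-negative term $\tfrac{\gamma_n}{2}\|\pi_n\|^2$ on the left and passing to the limit: the left side $\int_\Omega c_n \pi_n\,\d\lambda = \dual{\pi_n}{c_n}$ converges to $\dual{\pi}{\cost} = \int_\Omega \cost\,\d\pi$ using $\pi_n \weakly^* \pi$ together with $c_n \to \cost$ in $W^{1,p}(\Omega) \embed C(\Omega)$ (a weak-$\ast$/strong pairing passage to the limit); the right side converges to $\int_\Omega \cost\,\d\varphi$ by the same weak-$\ast$/uniform argument applied to $\tilde\varphi_n \weakly^* \varphi$ (which follows since $\tilde\varphi_n$ and $\varphi_n$ have the same weak-$\ast$ limit, the mollification error vanishing by Lemma~\ref{lem:convolconv} applied on $\Omega$, using that $\varphi$ is supported away from $\partial\Omega$ because both its marginals are). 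This yields $\int_\Omega \cost\,\d\pi \leq \int_\Omega \cost\,\d\varphi$, and since $\varphi$ was arbitrary, $\pi$ is optimal for \eqref{eq:KP}.

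The main obstacle I anticipate is the bookkeeping around the mollified competitor $\tilde\varphi_n$: one must produce, from $\varphi_n \in \Pi(\mu_1^n, \marg)$, an $L^2$-function on $\Omega_1 \times \Omega_2$ (or the enlarged domain $\Omega_i^{\delta_n}$) whose two marginals are exactly $\TT_1^n(\mu_1^n)$ and $\TT_2^n(\marg)$, is non-negative, and has $L^2$-norm growing no faster than $\delta_n^{-d/2}$. The natural candidate is $(\varphi_1^{\delta_n} \otimes \varphi_2^{\delta_n}) \ast \varphi_n$ plus a small correction term $\tfrac{\delta_n}{|\Omega|} \cdot (\text{something})$ to absorb the constant shifts $\tfrac{\delta_n}{|\Omega_i|}$ in the definition \eqref{eq:defTT} of $\TT_i^n$; one checks its marginals factor correctly because convolution commutes with the marginal (projection) operation and $\|\varphi_i^{\delta_n}\|_{L^1} = 1$. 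The correction term must itself have controlled mass and $L^2$-norm, which it does since $\delta_n \to 0$. Verifying that the marginal identities hold exactly $\lambda_i$-a.e., and that the $L^2$-bound $\|(\varphi_1^{\delta_n}\otimes\varphi_2^{\delta_n}) \ast \varphi_n\|_{L^2(\Omega^{\delta_n})} \lesssim \|\varphi_n\|_{\Measure} \, \|\varphi_1^{\delta_n}\otimes\varphi_2^{\delta_n}\|_{L^2(\R^d)} = \|\varphi_n\|_{\Measure}\,\|\varphi_1^{\delta_n}\|_{L^2}\|\varphi_2^{\delta_n}\|_{L^2} = O(\delta_n^{-d_1/2}\delta_n^{-d_2/2}) = O(\delta_n^{-d/2})$ via Young's inequality, is the technical heart of the argument. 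Everything else is a routine passage to the limit in linear pairings.
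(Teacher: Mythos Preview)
Your proposal is correct and follows essentially the same route as the paper. The paper compares against an arbitrary \emph{optimal} plan $\pi^*$ for \eqref{eq:KP} rather than an arbitrary feasible competitor, but this is cosmetically equivalent; the mollified competitor is built exactly as you anticipate, namely $\tilde\varphi_n = (\varphi_1^{\delta_n}\otimes\varphi_2^{\delta_n})\ast\varphi_n + \tfrac{\delta_n}{|\Omega_1|\,|\Omega_2|}$, and the $L^2$-bound $\|\tilde\varphi_n\|_{L^2}^2 = O(\delta_n^{-d})$ together with \eqref{eq:RegParamSeq} kills the regularization term precisely as you describe.
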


\begin{proof}
    In order to avoid double subscripts, we again assume w.l.o.g.\ that the whole sequence converges.
    Since $\Prob(\Omega_1)$ as well as the set $\MM$ from \eqref{eq:defsetM} are weakly-$\ast$ closed, 
    see Lemma~\ref{lem:MMclosed}, the properties
    for $\mu_1$ follow immediately. 

    As we have already seen in Lemma \ref{lem:WeakLimFeasKant}, 
    $\pi$ is feasible for the Kantorovich problem \eqref{eq:KP} with respect to $\mu_1$ and $\marg$. 
    So, it suffices to show the optimality of $\pi$ for \eqref{eq:KP}. To this end, 
    recall the lower-level problems from the feasible sets of \textnormal{(\hyperref[eq:BKgam]{BK$_n$})} 
    that are solved by $\pi_n$:
    \begin{equation}\tag{$\text{KP}_n$}\label{eq:KPn}
    \left\{\quad
    \begin{aligned}
        \min_{\pi} \quad & \KK_n(\pi) \coloneqq \int_{\Omega} c_n\, \pi \, \d \lambda
        + \tfrac{\gamma_n}{2} \|\pi\|_{L^2(\Omega)}^2 \\
        \text{s.t.} \quad & \pi \in L^2(\Omega), \quad \pi \geq 0 
        \quad\lambda\text{-a.e. in}~\Omega, \\
        & \int_{\Omega_2} \pi(x_1, x_2) \d \lambda_2(x_2) 
        =\TT_1^n(\mu_1^n)(x_1)\quad\lambda_1\text{-a.e.\ in}~\Omega_1, \\
		& \int_{\Omega_1} \pi(x_1, x_2) \d \lambda_1(x_1) 
		= \TT_2^n(\marg)(x_2)\quad\lambda_2\text{-a.e.\ in}~\Omega_2.
    \end{aligned}
    \right.
    \end{equation}
    By \cite[Theorem~4.1]{Vil09}, we know that there is at least one solution of the Kantorovich problem~\eqref{eq:KP} 
    associated with $\mu_1$, $\marg$, and the limit cost function $\cost$. We consider an arbitrary of these solutions  
	and denote it by $\pi^* \in \Measure(\Omega)$.
	Owing to Lemma~\ref{lem:TransPlanApprox}, there exists a sequence of non-negative couplings 
	$\{\pi_n^*\}_{n\in\N}$ between $\mu_1^n$ and $\marg$ that converges weakly-$\ast$ to $\pi^*$. 
    We then define
    \begin{equation*}
    \begin{aligned}
        & \varphi_n(x_1,x_2) 
		 :=
        \varphi_1^n(x_1)\, \varphi_2^n(x_2), 
        \quad (x_1,x_2) \in \Omega,\\
        \text{and} \;\;  &\vartheta_n^* 
        := \varphi_n\ast\pi_n^* + \frac{\delta_n}{|\Omega_1|\,|\Omega_2|} 
        = \int_\Omega \varphi_n(\xi- \cdot\,) \,\d\pi_n^*(\xi) + \frac{\delta_n}{|\Omega_1|\,|\Omega_2|} 
        \in L^2(\Omega).    
    \end{aligned}
    \end{equation*}
	Then, the non-negativity of $\pi_n^*$ implies the positivity of $\vartheta_n^*$. Moreover, the definition of $\vartheta^*_n$ 
	in combination with Fubini's theorem yields 
	\begin{equation*}
		\int_{\Omega_2} \vartheta_n^*\d{\lambda_2}
		= \varphi_1^n\ast\mu_1^n + \frac{\delta_n}{|\Omega_1|}
		= \TT_1^n(\mu_1^n),\quad 
		\int_{\Omega_1} \vartheta_n^*\d{\lambda_1}
		= \varphi_2^n\ast\marg + \frac{\delta_n}{|\Omega_2|}
		= \TT_2^n(\mu_2^n)
	\end{equation*}
	so that $\vartheta_n^*$ is feasible for \eqref{eq:KPn}. 
	For the objective of the Kantorovich problem, the optimality of $\pi_n$ for \eqref{eq:KPn} yields
	\begin{equation}\label{eq:cnconv1}
	\begin{aligned}
	    \dual{\cost}{\pi}
	    = \lim_{n\to\infty} \dual{c_n}{\pi_n} 
	    &\leq \liminf_{n\to\infty} \int_{\Omega} c_n \,\pi_n\, \d\lambda +
	    \tfrac{\gamma_n}{2} \,\|\pi_n\|_{L^2(\Omega)}^2 \\
	    &\leq \limsup_{n\to \infty}  \int_{\Omega} c_n \,\vartheta_n^*\, \d\lambda +
	    \tfrac{\gamma_n}{2} \,\|\vartheta_n^*\|_{L^2(\Omega)}^2.
	\end{aligned}
	\end{equation}
	Let us investigate the two addends on the right hand side of this inequality separately.
	Since $c_n \weakly \cost$ in $W^{1,p}(\Omega)$ and the embedding $W^{1,p}(\Omega) \embed C(\Omega)$
	is compact due to $p > d$, $c_n$ converges uniformly to $\cost$ in $\Omega$.
	Let us define $\Omega_\rho := \{x \in \Omega : \dist(x, \partial\Omega) \geq \rho\}$. 
	Then, the uniform convergence of both $c_n$ and the convolution in compact subsets of $\interior(\Omega)$ yields
	\begin{equation*}
    	\max_{x\in \Omega_\rho} 
    	\Big| \int_{\Omega} c_n(\xi) \,\varphi(x-\xi)\, \d\lambda(\xi) - \cost(x) \Big| = 0.
	\end{equation*}
	Since $\supp(\pi_n^*) \subset \supp(\mu_1^n) \times \supp(\marg) \subset \Omega_\rho$, this 
	in combination with the definition of $\vartheta^*_n$ and the weak-$\ast$ convergence of $\pi_n^*$ implies
	\begin{equation}\label{eq:cnconv2}
	\begin{aligned}
	    \int_{\Omega} c_n \,\vartheta_n^*\, \d\lambda 
	    &= \int_{\Omega_\rho} \int_{\Omega} c_n(x) \,\varphi(x-\xi) \d\lambda(x) \,\d\pi_n^*(\xi) \\[-1ex]
	    &\qquad\qquad\qquad\qquad 
	    + \frac{\delta_n}{|\Omega_1|\, |\Omega_2|} \int_\Omega c_n(x)\,\d\lambda(x) 
	    \to \dual{\cost}{\pi^*}.
	\end{aligned}
	\end{equation}
	For the second addend on the right hand side of \eqref{eq:cnconv1}, we obtain
	\begin{equation*}
       \|\vartheta_n^*\|_{L^2(\Omega)}^2
       \le 2 \int_{\Omega} \int_\Omega \varphi(x - \xi) \,\d\pi_n^*(\xi)^{2} \,\d\lambda(x) + 2 \delta_n^2 \\
       \leq 2\|\varphi_n\|_{L^2(B(0, \delta_n))}^2 \|\pi_n^*\|_{\Measure(\Omega)}^2 + 2\delta_n^2, 
	\end{equation*}
	where the $L^2$-norm of the standard mollifier is estimated by
	\begin{equation*}
	\begin{aligned}
	    \|\varphi_n\|_{L^2(B(0, \delta_n))}^2
	    &= \|\varphi_1^n\|_{L^2(B_1(0, \delta_n))}^2\,\|\varphi_2^n\|_{L^2(B_2(0, \delta_n))}^2 \\
	    &\leq \prod_{i=1}^2 \|\varphi_i^n\|_{L^\infty(B_i(0, \delta_n))} \, \|\varphi_i^n\|_{L^1(B_i(0, \delta_n))}
	    \leq C\,\delta_n^{-d_1 - d_2} = C\,\delta_n^{-d}
	\end{aligned}
	\end{equation*}
	with a constant $C>0$. In view of the coupling of $\gamma_n$ and $\delta_n$ in \eqref{eq:RegParamSeq} 
	and $\|\pi_n^*\|_{\Measure(\Omega)} = 1$ for all $n\in\N$, we thus arrive at 
    \begin{equation*}
        \frac{\gamma_n}{2} \,\|\vartheta_n^*\|_{L^2(\Omega)}^2
        \leq C \Big(\frac{\gamma_n}{\delta_n^{d}} + \gamma_n\,\delta_n^2\Big) \to 0.
    \end{equation*}
    Inserting this together with \eqref{eq:cnconv2} in \eqref{eq:cnconv1} implies 
    $\dual{\cost}{\pi} \leq \dual{\cost}{\pi^*}$ and, since $\pi$ is feasible for \eqref{eq:KP} associated with 
    $\mu_1$, $\marg$, and $\cost$, as seen above, while $\pi^*$ is optimal for that problem, 
    $\pi$ is optimal, too, and thus $(\pi, \mu_1)$ is feasible for \eqref{eq:BK} as claimed.
\end{proof}

Recall the sequence of solutions
$(\bar\pi_n, \bar\mu_1^n, \bar c_n)$  to the regularized bilevel problems from the beginning of this section. 
We already know that this sequence admits a weak-$\ast$ accumulation point. To be more precise 
$(\bar\pi_n, \bar\mu_1^n) \weakly^* (\bar\pi, \bar\mu_1)$ in $\Measure(\Omega) \times \Measure(\Omega_1)$
after possibly restricting to a subsequence, while 
the whole sequence of cost functions $\bar c_n$ converges strongly in $W^{1,p}(\Omega)$ to $\cost$. 
Thus, according to Proposition~\ref{prop:WeakLimFeasBilevel}, the weak-$\ast$ limit $(\bar\pi, \bar\mu_1)$ 
is feasible for \eqref{eq:BK}. As this observation holds for every accumulation point, 
we immediately obtain the following
\begin{corollary}\label{cor:ClusterPointOptimal}
    Assume that the vanishing 
    sequences $\{\gamma_n\}_{n\in \N}$ and $\{\delta_n\}_{n\in\N}$ satisfy \eqref{eq:RegParamSeq}.
	Suppose moreover that $(\pi^*,\mu_1^*)$ is a solution to the bilevel problem \eqref{eq:BK} that admits a \emph{recovery sequence}
    in the following sense: There is a sequence 
    $(\pi_n^*,\mu_{1,n}^*, c_n^*)_{n\in\N}\subset\Measure(\Omega)\times\Measure(\Omega_1)\times W^{1,p}(\Omega)$ 
    satisfying
	\begin{enumerate}[label=\textup{(\roman*)}]
		\item\label{it:recfeas}
		 $(\pi_n^*,\mu_{1, n}^*,c_n^*)$ is feasible for \textnormal{(\hyperref[eq:BKgam]{$\text{BK}_n$})} for all $n \in \N$,
		\item\label{it:recconv}
		 $\limsup_{n\to\infty} \JJ_n(\pi_n^*,\mu_{1,n}^*,c_n^*) \leq \JJ(\pi^*,\mu_1^*)$.
	\end{enumerate}
	Then, every weak-$\ast$ accumulation point $(\bar{\pi},\bar{\mu}_1)$ 
	of the sequence of solutions $\{( \bar{\pi}_n, \bar{\mu}_1^n )\}_{n \in \N}$ 
	to the regularized bilevel problem is also a solution to \eqref{eq:BK}.
\end{corollary}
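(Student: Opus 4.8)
The plan is to run the by-now-standard lower-limit/recovery-sequence argument familiar from $\Gamma$-convergence. All the substantial analytic work has already been done: after passing to a subsequence we have $(\bar\pi_n,\bar\mu_1^n)\weakly^*(\bar\pi,\bar\mu_1)$ in $\Measure(\Omega)\times\Measure(\Omega_1)$, the whole sequence $\bar c_n$ converges strongly to $\cost$ in $W^{1,p}(\Omega)$, and Proposition~\ref{prop:WeakLimFeasBilevel} already guarantees that $(\bar\pi,\bar\mu_1)$ is feasible for \eqref{eq:BK} (in particular $\bar\pi$ is optimal for the unregularized Kantorovich problem with marginals $\bar\mu_1$, $\marg$ and cost $\cost$). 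Hence it only remains to compare objective values, and the corollary reduces to abstract bookkeeping.

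First I would discard the penalty term on the regularized-solution side by a one-sided estimate: since $\tfrac{1}{p\gamma_n}\|\,\cdot\,-\cost\|_{W^{1,p}(\Omega)}^p\ge 0$ and $\JJ$ is weakly-$\ast$ lower semicontinuous,
\[
  \JJ(\bar\pi,\bar\mu_1)\le\liminf_{n\to\infty}\JJ(\bar\pi_n,\bar\mu_1^n)\le\liminf_{n\to\infty}\JJ_n(\bar\pi_n,\bar\mu_1^n,\bar c_n).
\]
Next, because $(\bar\pi_n,\bar\mu_1^n,\bar c_n)$ is a \emph{global} minimizer of \textnormal{(\hyperref[eq:BKgam]{$\text{BK}_n$})} while, by property~\ref{it:recfeas}, the recovery point $(\pi_n^*,\mu_{1,n}^*,c_n^*)$ is merely feasible for the same problem, the optimality inequality $\JJ_n(\bar\pi_n,\bar\mu_1^n,\bar c_n)\le\JJ_n(\pi_n^*,\mu_{1,n}^*,c_n^*)$ holds for every $n$. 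Passing to the limit and using property~\ref{it:recconv} gives the chain
\[
  \JJ(\bar\pi,\bar\mu_1)\le\liminf_{n\to\infty}\JJ_n(\bar\pi_n,\bar\mu_1^n,\bar c_n)\le\limsup_{n\to\infty}\JJ_n(\pi_n^*,\mu_{1,n}^*,c_n^*)\le\JJ(\pi^*,\mu_1^*).
\]
Finally, since $(\pi^*,\mu_1^*)$ is a global minimizer of \eqref{eq:BK} and $(\bar\pi,\bar\mu_1)$ is feasible for \eqref{eq:BK}, the reverse inequality $\JJ(\pi^*,\mu_1^*)\le\JJ(\bar\pi,\bar\mu_1)$ is automatic; hence equality holds throughout and $(\bar\pi,\bar\mu_1)$ is itself a global solution of \eqref{eq:BK}. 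As nothing in the argument depended on the choice of accumulation point, it applies to every weak-$\ast$ accumulation point of $\{(\bar\pi_n,\bar\mu_1^n)\}_{n\in\N}$.

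I do not expect any real obstacle here; the only point requiring a word of care is the asymmetric treatment of the penalty term—it must be dropped via lower semicontinuity on the minimizer side and kept, via property~\ref{it:recconv}, on the recovery side—which is precisely what makes the device of treating $c$ as an extra optimization variable compatible with the limit passage. The genuine difficulty lies not in this corollary but in actually constructing recovery sequences, which is why that task is deferred to the application examples at the end of the paper (and, in finite dimensions, to \cite{kant3}).
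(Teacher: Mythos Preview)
Your proposal is correct and follows essentially the same argument as the paper: the same chain of inequalities using weak-$\ast$ lower semicontinuity of $\JJ$, non-negativity of the penalty term, optimality of $(\bar\pi_n,\bar\mu_1^n,\bar c_n)$ for \textnormal{(\hyperref[eq:BKgam]{$\text{BK}_n$})}, and the recovery-sequence bound, capped off by the optimality of $(\pi^*,\mu_1^*)$ for \eqref{eq:BK}.
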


\begin{proof}
    As the feasibility of $(\bar\pi, \bar\mu_1)$   
    has already been established, we only need to prove its 
    optimality, which is a consequence of the existence of 
    a recovery sequence and the weak-$\ast$ lower 
    semicontinuity of $\JJ$.
    To this end, we index the weakly-$\ast$ convergent
    subsequence by the symbol $n$.
    \begin{align*}
        \JJ(\bar{\pi},\bar{\mu}_1)
        &\leq \liminf_{n\to\infty} \JJ(\bar\pi_n,\bar\mu_1^n) \\
        &\leq \liminf_{n\to\infty} \JJ(\bar\pi_n,\bar\mu_1^n) + \frac{1}{p\,\gamma_n} \|\bar c_n - \cost\|_{W^{1,p}(\Omega)}^p\\
        &= \liminf_{n\to\infty} \JJ_n(\bar\pi_n,\bar\mu_1^n,\bar c_n) \\
        &\leq \limsup_{n\to\infty} \JJ_n(\pi_n^*,\mu_{1,n}^*,c_n^*)
		\leq \JJ(\pi^*,\mu_1^*).
	\end{align*}
	The optimality of $(\pi^*,\mu_1^*)$ gives the result.
\end{proof}

The crucial task is now of course to establish the existence of a recovery sequence satisfying 
\ref{it:recfeas} and \ref{it:recconv}. 
So far, unfortunately, we are not able to guarantee the existence of such a sequence in the general setting without further assumptions. 
If, however, $\marg \ll \lambda^{d_2}$, $c(x_1,x_2) = h(x_1-x_2)$ with a strictly convex function $h$, 
and $\JJ$ is even weak-$\ast$ continuous,
then a recovery sequence can be constructed as we will see in the next section.


\subsection{Reverse Approximation in Case of Strictly Convex Costs and an Absolutely Continuous Marginal}
\label{sec:brenier}

\begin{theorem}\label{thm:AbsContStrConvex}
    Suppose that, in addition to our standing assumptions, the following hold true:
 	\begin{enumerate}[label=\textup{(\arabic*)}]
		\item\label{it:Omega12} $\Omega_1 = \Omega_2 =: \Omega_*$ (such that $d_1 = d_2 =: d_*$),
		\item\label{it:abscont} $\marg \ll \lambda_*$ with $\lambda_* := \lambda_1 = \lambda_2$,
		\item\label{it:costh} $\cost(x_1,x_2) = h(x_1-x_2)$ with a function $h: \R^{d_*} \to \R$ that is strictly convex and 
		even symmetric, i.e., $h(-\xi) = h(\xi)$ for all $\xi\in \R^{d_*}$,
		\item\label{it:weakcont} $\JJ$ is upper semicontinuous w.r.t.\ weak-$\ast$ convergence in the first variable, i.e., 
        if $\pi_n \weak^* \pi$ in $\Measure(\Omega)$, then, for every $\mu_1 \in \Measure(\Omega_1)$, there holds
        \begin{equation*}
            \limsup_{n\to\infty} \JJ(\pi_n, \mu_1)
			 \le \JJ(\pi, \mu_1),
        \end{equation*}
        \item The null sequences $\{\gamma_n\}_{n\in \N}$ and $\{\delta_n\}_{n\in\N}$ satisfy \eqref{eq:RegParamSeq}.
    \end{enumerate}   	
    Then, a sequence of solutions $\{(\bar\pi_n, \bar\mu_1^n, \bar c_n)\}_{n \in \N}$ of the regularized 
    bilevel problems \textnormal{(\hyperref[eq:BKgam]{BK$_n$})}, $n \in \N$, 
    converges (up to subsequences) to an optimal solution of \eqref{eq:BK}.
\end{theorem}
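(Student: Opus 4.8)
The plan is to invoke Corollary~\ref{cor:ClusterPointOptimal}, so the whole proof reduces to producing, for some globally optimal solution $(\pi^*,\mu_1^*)$ of \eqref{eq:BK} --- which exists by Proposition~\ref{prop:existBK} --- a recovery sequence in the sense of \ref{it:recfeas}--\ref{it:recconv}. I claim that the simplest possible candidate already works: take the \emph{constant} choices $\mu_{1,n}^*:=\mu_1^*$ and $c_n^*:=\cost$, and let
\[
    \pi_n^*:=\SS_n\big(\cost,\TT_1^n(\mu_1^*),\TT_2^n(\marg)\big),\qquad n\in\N .
\]
First I would verify that this is well defined and feasible for \textnormal{(\hyperref[eq:BKgam]{BK$_n$})}: since $\mu_1^*$ is feasible for \eqref{eq:BK} we have $\mu_1^*\in\Prob(\Omega_1)$ with $\dist(\supp(\mu_1^*),\partial\Omega_1)\ge\rho\ge\delta_n$, so \eqref{eq:equalmass} shows that $\TT_1^n(\mu_1^*)$ and $\TT_2^n(\marg)$ are non-negative with equal $L^1$-mass $1+\delta_n$; hence $\pi_n^*$ exists (and is unique) by Lemma~\ref{lem:KPgamexist}, and $(\pi_n^*,\mu_{1,n}^*,c_n^*)$ is feasible, which is \ref{it:recfeas}.

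The heart of the argument is \ref{it:recconv}. Because $c_n^*\equiv\cost$, the penalty term in $\JJ_n$ vanishes, so $\JJ_n(\pi_n^*,\mu_{1,n}^*,c_n^*)=\JJ(\pi_n^*,\mu_1^*)$ and it remains to prove $\limsup_{n}\JJ(\pi_n^*,\mu_1^*)\le\JJ(\pi^*,\mu_1^*)$. By \eqref{eq:equalmass} and $\pi_n^*\ge0$ one has $\|\pi_n^*\|_{\Measure(\Omega)}=1+\delta_n\le1+\rho$, so $\{\pi_n^*\}$ is bounded in $\Measure(\Omega)$ and admits weak-$\ast$ accumulation points. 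Since $(\pi_n^*,\mu_1^*,\cost)$ is feasible for \textnormal{(\hyperref[eq:BKgam]{BK$_n$})} and the constant sequences trivially converge, Proposition~\ref{prop:WeakLimFeasBilevel} tells me that \emph{every} weak-$\ast$ accumulation point $\pi$ of $\{\pi_n^*\}$ is optimal for \eqref{eq:KP} with marginals $\mu_1^*$, $\marg$ and cost $\cost$. Here the structural hypotheses enter decisively: by \ref{it:Omega12} both marginals live on $\Omega_*$, so after the transposition $(x_1,x_2)\mapsto(x_2,x_1)$ and using that $h$ is even, this is the optimal transport problem from $\marg$ to $\mu_1^*$ for a cost of the form $h(\cdot-\cdot)$ with $h$ strictly convex by \ref{it:costh}; since $\marg\ll\lambda_*$ by \ref{it:abscont}, the Gangbo--McCann theorem on optimal transport with strictly convex costs (see, e.g., \cite{Vil09, San15}, and \cite{Vil03} for the quadratic case) guarantees that this optimal plan is \emph{unique}, irrespective of $\mu_1^*$. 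Hence $\pi=\pi^*$, and as this pins down every accumulation point, the whole sequence satisfies $\pi_n^*\weakly^*\pi^*$ in $\Measure(\Omega)$. Invoking the upper semicontinuity hypothesis \ref{it:weakcont} then yields
\[
    \limsup_{n\to\infty}\JJ_n(\pi_n^*,\mu_{1,n}^*,c_n^*)=\limsup_{n\to\infty}\JJ(\pi_n^*,\mu_1^*)\le\JJ(\pi^*,\mu_1^*),
\]
which is exactly \ref{it:recconv}.

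With a recovery sequence at hand, Corollary~\ref{cor:ClusterPointOptimal} applies and shows that every weak-$\ast$ accumulation point $(\bar\pi,\bar\mu_1)$ of the sequence of solutions $\{(\bar\pi_n,\bar\mu_1^n)\}$ to \textnormal{(\hyperref[eq:BKgam]{BK$_n$})} is a global solution of \eqref{eq:BK}. Since $\{(\bar\pi_n,\bar\mu_1^n)\}$ is bounded in $\Measure(\Omega)\times\Measure(\Omega_1)$ by \eqref{eq:PiBarEst}, it indeed possesses such accumulation points, and the asserted subsequential convergence follows.

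The step I expect to be the main obstacle is precisely the convergence $\pi_n^*\weakly^*\pi^*$, and within it the uniqueness of the minimizer of \eqref{eq:KP}; this is the sole place where the restrictive hypotheses \ref{it:Omega12}--\ref{it:costh} are used. Without them, Proposition~\ref{prop:WeakLimFeasBilevel} still identifies the accumulation points of $\{\pi_n^*\}$ as \emph{some} optimal transport plans, but they need not coincide with the $\pi^*$ attached to the chosen optimal pair $(\pi^*,\mu_1^*)$, and the inequality chain producing \ref{it:recconv} collapses. Everything else --- well-definedness of $\SS_n$, the mass bookkeeping of \eqref{eq:equalmass}, the uniform bound on $\{\pi_n^*\}$, and the passage to the limit --- is routine and already packaged in Lemmas~\ref{lem:KPgamexist} and \ref{lem:convolconv} and in Proposition~\ref{prop:WeakLimFeasBilevel}.
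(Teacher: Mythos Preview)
Your proof is correct and follows essentially the same approach as the paper: construct the recovery sequence with constant marginal $\mu_1^*$ and cost $\cost$, use Proposition~\ref{prop:WeakLimFeasBilevel} to identify weak-$\ast$ accumulation points of $\{\pi_n^*\}$ as optimal plans, invoke the Gangbo--McCann/Brenier uniqueness result (via the transposition trick and the evenness of $h$) to pin down the limit as $\pi^*$, and conclude with Corollary~\ref{cor:ClusterPointOptimal}. The paper cites \cite[Theorem~2.44]{Vil09} for the uniqueness step, which is precisely the generalized Brenier theorem you refer to.
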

\begin{proof}
    The assertion is a direct consequence of Corollary~\ref{cor:ClusterPointOptimal} and 
    the uniqueness of the transport plan under the additional assumptions. 
    Let $(\pi^*, \mu_1^*)$ be an optimal solution of the bilevel problem \eqref{eq:BK}.
    According to the generalized version of Brenier's theorem in \cite[Theorem~2.44]{Vil09}, 
    the additional assumptions~\ref{it:Omega12}--\ref{it:costh}
    ensure the existence of an optimal transport map, which in turn yields a unique optimal transport plan that
    solves the Kantorovich problem 
    with marginals $(\marg, \mu_1^*)$ and cost $\cost(x_1,x_2)$, i.e., 
    \begin{equation*}
        \min\Big\{ \int_\Omega \cost(x_2, x_1)\,\d\varphi(x_1,x_2) : \varphi \in \Pi(\marg, \mu_1^*), \;\varphi \geq 0  \Big\}.
    \end{equation*}        
    For reasons of symmetry, $\pi$ is a solution to the above problem if and only if 
    $\pi'$ defined by $\pi'(B_1\times B_2) = \pi(B_2\times B_1)$, $B_1, B_2\in \BB(\Omega_*)$, solves
    \begin{equation}\tag{KP$^*$}\label{eq:KP*}
        \min\Big\{ \int_\Omega \cost(x_1,x_2)\,\d\varphi(x_1,x_2) : \varphi \in \Pi(\mu_1^*,\marg), \;\varphi \geq 0  \Big\}
    \end{equation}        
    such that the solution set of \eqref{eq:KP*} is a singleton, too. Therefore, $\pi^*$ is the only solution of \eqref{eq:KP*}.
    
    Let us define the sequence
    \begin{equation}\label{eq:recoveryseq}
        \mu_{1,n}^* := \mu_1^*, \quad c_n^* := \cost, \quad 
        \pi_n^* := \SS_n\big( \cost , \TT_1^n(\mu_1^*), \TT_2^n(\marg) \big), \quad n\in \N.
    \end{equation}
    By construction, $(\pi_n^*, \mu^*_{1,n}, c_n^*)$ is feasible for \textnormal{(\hyperref[eq:BKgam]{BK$_n$})} 
    for every $n\in \N$. Since the sequence $\{\pi_n^*\}$ is bounded, there is a weakly-$\ast$ convergent subsequence 
    and, according to Proposition~\ref{prop:WeakLimFeasBilevel}, its limit,
    together with $\mu_1^*$, is feasible for \eqref{eq:BK}, 
    i.e., 
    it is a solution of \eqref{eq:KP*}. Thus, by uniqueness, said weak-$\ast$ limit equals $\pi^*$ and a classical argument 
    by contradiction yields that the whole sequence $(\pi_n^*, \mu^*_{1,n}, c_n^*)$ converges (weakly-$\ast$) to 
    $(\pi^*, \mu^*_{1}, \cost)$. The presupposed weak-$\ast$ continuity of $\JJ$ finally yields that 
    \begin{equation*}
        \limsup_{n\to\infty} \JJ_n(\pi_n^*,\mu_{1,n}^*,c_n^*)
        = \limsup_{n\to\infty} \JJ(\pi_n^*, \mu_1^*)
	\le \JJ(\pi^*, \mu^*_1) \quad \text{as } n \to \infty
    \end{equation*}
    such that $\{(\pi_n^*, \mu^*_{1,n}, c_n^*)\}_{n\in \N}$ is the desired recovery sequence. 
    Corollary~\ref{cor:ClusterPointOptimal} then yields the claim.
\end{proof}

\begin{remark}\label{rem:ohnec}
    An inspection of the proof of Theorem~\ref{thm:AbsContStrConvex} shows that, under the assumptions of this theorem,  
    there is no need for $c$ as additional optimization variable to construct the recovery sequence, 
    as $c_n^*$ is set to $\cost$ in \eqref{eq:recoveryseq}. Accordingly, the assertion of Theorem~\ref{thm:AbsContStrConvex} remains 
    true, if one considers
    \begin{equation}\tag{$\textup{BK}_{\gamma, c_\textup{d}}^\delta$}\label{eq:BKgam2}
    \left\{\quad
    \begin{aligned}
        \inf_{\pi_\gamma,\mu_1} \quad & \JJ(\pi_\gamma, \mu_1)\\
        \text{s.t.} \quad & \mu_1\in \Prob(\Omega_1), \quad \dist(\supp(\mu_1), \partial\Omega_1) \geq \rho, \\
        & \pi_\gamma = \SS_\gamma\big( \cost, \TT_1^\delta(\mu_1), \TT_2^\delta(\marg) \big).
    \end{aligned}
    \right.        
    \end{equation}    
    instead of \eqref{eq:BKgam}. In the 
	finite-dimensional setting, however, it is
	exactly the additional optimization variable $c$, 
    which enables the construction of a recovery
    sequence, see \cite{kant3}.
    
Consequently, one may drop the additional
optimization over $c$ and the term
$\frac{1}{\gamma p}\|c - \cost\|_{W^{1,p}(\Omega)}^p$
in the setting of Theorem \ref{thm:AbsContStrConvex}.
Then, the restrictions on the domain
that ensure the compact embedding
$W^{1,p}(\Omega) \embed C(\Omega)$ are no longer necessary
and one may restrict to $\Omega_1$ and $\Omega_2$
being compact, see also Remark
\ref{rem:BKexist_domain}.
\end{remark}

The additional assumptions in Theorem~\ref{thm:AbsContStrConvex} are certainly rather restrictive, in particular 
condition~\ref{it:weakcont}.
Let us therefore end our considerations by giving two examples fulfilling these assumptions. 

\subsubsection{Marginal Identification Problem}
As an example for a bilevel Kantorovich problem, we 
have already mentioned the problem 
of identifying the marginal $\mu_1$ based on measurements of the transport plan in an observation domain $D\subset \Omega$
in Section \ref{sec:intro}. 
The most natural choice for the upper-level objective probably reads
\begin{equation}\label{eq:noncompobj}
    \JJ(\pi, \mu_1) := \|\pi - \pi^\textup{d} \|_{\Measure(D)}  + \nu \,\| \mu_1  - \mu_1^\textup{d}\|_{\Measure(\Omega_*)},
\end{equation}
where $\pi^{\textup{d}}\in \Measure(D)$ denotes the measurement of the transport plan, 
while $\mu_1^{\textup{d}}\in \Prob(\Omega_*)$ is a guess for the unknown marginal $\mu_1$.
Moreover, $\nu \geq 0$ is a given weighting parameter.
However, an objective of this form does not satisfy condition \ref{it:weakcont} in Theorem~\ref{thm:AbsContStrConvex}. 
To ensure this condition, let us assume that $D$ is an open and bounded domain with a Lipschitz boundary. 
Then, thanks to $p > d$ by our standing assumptions, 
the embedding $W^{1,p}_0(D) \embed C(\overline{D})$ is compact and so,  
by Schauder's theorem, $\Measure(\overline{D})$ embeds compactly in 
$W^{-1,p'}(D) := W^{1,p}_0(D)^*$, where, as usual, $p' = p/(p-1)$ denotes the conjugate exponent.
Therefore, for a given $\pi^{\textup{d}} \in \Measure(\overline{D})$, an objective of the form
\begin{equation}\label{eq:compobj}
    \widetilde\JJ(\pi, \mu_1) := \|\pi - \pi^\textup{d} \|_{W^{-1,p'}(D)}^{p'} 
    + \nu \,\| \mu_1  - \mu_1^\textup{d}\|_{\Measure(\Omega_*)}
\end{equation}
fulfills condition~\ref{it:weakcont}. The $W^{-1,p'}(D)$-norm can be evaluated with the help of the $p$-Laplacian. 
For this purpose, 
denote by $\psi \in W^{1,p}_0(D)$
the unique solution of 
\begin{equation}\label{eq:plaplace}
    -\div (|\nabla \psi|^{p-2} \nabla \psi) = \pi - \pi^{\textup{d}} \quad \text{in } W^{-1,p'}(D).
\end{equation}
Then, it is easily seen that
\begin{equation*}
\begin{aligned}
    \|\pi - \pi^{\textup{d}}\|_{W^{-1,p'}(D)}
    & = \sup_{v\in W^{1,p}_0(D)} \frac{\dual{\pi-\pi^{\textup{d}}}{v}}{\|\nabla v\|_{L^p(D;\R^d)}} \\
    & = \sup_{v\in W^{1,p}_0(D)} \frac{\int_D |\nabla \psi|^{p-2} \nabla \psi \cdot \nabla v\,\d \lambda}{\|\nabla v\|_{L^p(D;\R^d)}}
    = \|\nabla \psi\|_{L^p(D;\R^d)}^{p-1}
\end{aligned}
\end{equation*}
and hence, the objective from \eqref{eq:compobj} becomes $\|\nabla \psi\|_{L^p(D;\R^d)}^p  + \nu \,\| \mu_1  - \mu_1^\textup{d}\|$. 
If one aims to avoid the $p$-Laplace equation, one can resort to an equivalent norm based on the Poisson equation on $D$.
To this end, let $\eta \in \Measure(\overline{D})$ be given and consider
\begin{equation}\label{eq:pdeD}
    \varphi\in W^{1,p'}_0(D), \quad - \laplace \varphi = \eta \quad \text{in } W^{-1,p'}(D),
\end{equation}
where $\dual{-\laplace \varphi}{v} := \int_{D} \nabla \varphi \cdot \nabla v\, \d \lambda$, 
$\varphi \in W^{1,p'}_0(D)$, $v\in W^{1,p}_0(D)$, denotes the Laplace operator. 

\begin{lemma}\label{lem:groeger}
    Let $D$ be a bounded domain of class $C^1$. 
    Then there exists an exponent $p > d$ such that, for every $\eta \in \Measure(\overline{D})$, 
    there exists a unique solution $\varphi \in W^{1,p'}_0(D)$ of \eqref{eq:pdeD}. 
    The associated solution operator denoted by $G: \Measure(\overline{D}) \to W^{1,p'}_0(D)$ is linear and compact.
    
    If $d \leq 3$, the result also applies if $D$ is only
    a bounded Lipschitz domain.
\end{lemma}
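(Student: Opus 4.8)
The plan is to factor the solution operator as $G=(-\laplace)^{-1}\circ\iota$, where $\iota\colon\Measure(\overline D)\to W^{-1,p'}(D)$ is the canonical embedding and $(-\laplace)^{-1}\colon W^{-1,p'}(D)\to W^{1,p'}_0(D)$ is the inverse of the homogeneous Dirichlet Laplacian on the $L^{p'}$-scale. Compactness of $G$ will then follow from compactness of $\iota$, while well-posedness of \eqref{eq:pdeD} (and hence the very definition of $G$) will follow from the isomorphism property of $-\laplace$. Note that the embedding part has essentially already been recorded above (in the discussion preceding \eqref{eq:pdeD}); the genuinely new ingredient is the $L^{p'}$-solvability of the Poisson problem.

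\emph{Step 1.} For any $p>d$, Morrey's inequality (applied after extension by zero, so that only boundedness of $D$ is used, not any regularity of $\partial D$) yields a compact embedding $W^{1,p}_0(D)\hookrightarrow C(\overline D)$. Since $C(\overline D)^*=\Measure(\overline D)$ and $W^{1,p}_0(D)^*=W^{-1,p'}(D)$ by definition, Schauder's theorem shows that the adjoint map $\iota\colon\Measure(\overline D)\to W^{-1,p'}(D)$, $\dual{\iota\eta}{v}=\int_{\overline D}v\,\d\eta$, is compact, with $p'\in(1,d/(d-1))$.

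\emph{Step 2.} I would then invoke $L^q$-regularity for the homogeneous Dirichlet problem for $-\laplace$: there is an exponent $q$, which we take to be $p'$, such that $-\laplace\colon W^{1,q}_0(D)\to W^{-1,q}(D)$ is a topological isomorphism. For $D$ of class $C^1$ this holds for \emph{every} $q\in(1,\infty)$ (classical Calder\'on--Zygmund theory for the Dirichlet Laplacian on $C^1$ domains), so any $p>d$ is admissible. For a bounded Lipschitz domain the admissible range is only an open interval around $2$; by the results of Jerison and Kenig it contains $(3/2-\varepsilon,3+\varepsilon)$ when $d\ge 3$ and a wider interval when $d=2$ (the case $d=1$ being trivial), so $q=p'<d/(d-1)$ --- equivalently $p>d$ --- can be achieved precisely when $d\le 3$; this is exactly the origin of the dimensional restriction. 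For $d=2$ one may alternatively appeal to Gr\"oger's $W^{1,q}$-estimate.

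\emph{Step 3.} Fix $p>d$ with $-\laplace\colon W^{1,p'}_0(D)\to W^{-1,p'}(D)$ an isomorphism and set $G:=(-\laplace)^{-1}\circ\iota$. Then $G$ is linear and compact, being the composition of the compact operator $\iota$ with the bounded operator $(-\laplace)^{-1}$, and for $\eta\in\Measure(\overline D)$ the function $\varphi:=G\eta\in W^{1,p'}_0(D)$ is the unique solution of $-\laplace\varphi=\iota\eta$ in $W^{-1,p'}(D)$, which is precisely \eqref{eq:pdeD} since the action of $\iota\eta$ on $v\in W^{1,p}_0(D)$ is $\int_{\overline D}v\,\d\eta$; uniqueness comes from the injectivity of $-\laplace$ on $W^{1,p'}_0(D)$. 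The one real obstacle is Step 2 in the Lipschitz case, where one must check carefully that the Jerison--Kenig range of $L^q$-solvability for the Dirichlet Laplacian actually dips below $d/(d-1)$ for $d\le 3$; in the $C^1$ case the full range $(1,\infty)$ makes the argument routine, and verifying that Step 1 needs no boundary regularity (via the zero extension) is a minor but worthwhile remark.
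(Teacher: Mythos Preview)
Your proposal is correct and follows essentially the same strategy as the paper's proof: factor $G$ through the compact embedding $\Measure(\overline D)\hookrightarrow W^{-1,p'}(D)$ (via Schauder, from $W^{1,p}_0(D)\hookrightarrow C(\overline D)$) and the $W^{1,q}$-isomorphism property of the Dirichlet Laplacian, citing Calder\'on--Zygmund type results on $C^1$ domains and Jerison--Kenig/Gr\"oger for the Lipschitz case when $d\le 3$. The only cosmetic difference is that the paper first records the isomorphism $-\laplace\colon W^{1,p}_0(D)\to W^{-1,p}(D)$ at the exponent $p$ and then passes to $p'$ by duality, whereas you work directly at $q=p'$; since the admissible range is self-dual this amounts to the same thing.
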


\begin{proof}
    According to \cite[Theorem~4.6]{Sim72}, there exists a $p > d$
    such that the Poisson equation admits a unique solution in $W^{1,p}_0(D)$ for right hand 
    sides in $W^{-1,p}(D)$.\footnote{On $C^1$-domains, this even holds for every $p < \infty$, see~\cite[Theorem~4.6]{Sim72}.}
    The same assertion for Lipschitz domains can be found in \cite{Gro89} for the case $d=2$ and \cite{JK95} for $d=3$.
    By duality, there is thus a unique solution in $\varphi \in W^{1,p'}_0(D)$ to the state equation in \eqref{eq:pdeD} 
    for every right hand side in $W^{-1,p'}(D)$. 
    Due to the continuous embedding $W^{1,p}_0(D) \embed C(\overline{D})$
    already mentioned above, there holds $\Measure(\overline{D}) \embeds W^{-1,p'}(D)$ and we obtain 
    the existence and uniqueness of $\varphi \in W^{1,p'}_0(D)$ for every $\pi \in \Measure(\overline{D})$.
    The associated solution operator $G$ is  clearly linear and, by Banach's inverse theorem, continuous.
    The compactness of $G$ follows from the 
    compactness of the embedding $W^{1,p}(D) \embed C(\overline{D})$.
\end{proof}

Unfortunately, the $W^{-1,p'}(D)$-norm cannot be expressed
by means of the solution operator $G$, but there holds
$\|\eta\|_{W^{-1,p'}(D)}  \leq \|\nabla \varphi\|_{L^{p'}(\Omega;\R^d)} \leq \|G\|\, \|\eta\|_{W^{-1,p'}(D)}$
and therefore, 
\begin{equation*}
    \Measure(\overline{D}) \ni \eta \mapsto \|\nabla G \eta \|_{L^{p'}(\Omega;\R^d)} 
\end{equation*}
defines a norm equivalent to the $W^{-1,p'}(D)$-norm. Since $G$ is compact, an objective of the form
\begin{equation}\label{eq:compobj2}
    \widehat\JJ(\pi, \mu_1) := \|\nabla G(\pi - \pi^\textup{d}) \|_{L^{p'}(D;\R^d)}^{p'} 
    + \nu \,\| \mu_1  - \mu_1^\textup{d}\|_{\Measure(\Omega_*)}
\end{equation}
satisfies the condition \ref{it:weakcont} in Theorem~\ref{thm:AbsContStrConvex}.
Thus, if we consider the bilevel Kantorovich problem \eqref{eq:BK} with $\widetilde\JJ$ or $\widehat{\JJ}$, 
Theorem~\ref{thm:AbsContStrConvex} applies. Let us shortly turn to the associated regularized problems.
For this purpose, recall the following result from \cite{LMM21}:

\begin{lemma}[{\cite[Theorem~2.11]{LMM21}}]\label{lem:quadreg}
    Consider the regularized Kantorovich problem \eqref{eq:KPgam} with marginals $\mu_i \in L^2(\Omega_i)$, $i=1,2$, 
    and a cost function $c\in L^2(\Omega)$. 
    Assume that the marginals satisfy $\|\mu_1\|_{L^1(\Omega_1)} = \|\mu_2\|_{L^1(\Omega_2)}$
    and $\mu_i \geq \delta$ $\lambda_i$-a.e.\ in $\Omega_i$, $i=1,2$, with a constant $\delta > 0$. 
    Moreover, assume that there exists a constant $\cbound > -\infty$ such that 
    $c \geq \cbound$ $\lambda$-a.e.\ in $\Omega$. Then, $\pi_\gamma\in L^2(\Omega)$ is a solution of \eqref{eq:KPgam}
    if and only if there exist functions $\alpha_1\in L^2(\Omega_1)$ and $\alpha_2\in L^2(\Omega_2)$ satisfying	
    \begin{subequations}\label{eq:KantProbL2OptSystem}
	\begin{alignat}{3}
	    \pi_\gamma - \frac{1}{\gamma} (\alpha_1 \oplus \alpha_2 - c)_+ & = 0 & \quad & \lambda\text{-a.e. in}~\Omega,
	    \label{eq:KantProbL2OptSystem-a} \\
        \int_{\Omega_2} \pi_\gamma(x_1, x_2) \d\lambda_2(x_2) &= \mu_1(x_1) & & \lambda_1\text{-a.e. in}~\Omega_1, 
        \label{eq:KantProbL2OptSystem-b}\\
        \int_{\Omega_1}\pi_\gamma(x_1, x_2)\d\lambda_1(x_1) &= \mu_2(x_2) & & \lambda_2\text{-a.e. in}~\Omega_2.
        \label{eq:KantProbL2OptSystem-c}
    \end{alignat}
	\end{subequations}
    Herein, $(\alpha_1 \oplus \alpha_2)(x_1,x_2)\coloneqq\alpha_1(x_1)+\alpha_2(x_2)$ $\lambda$-a.e.\ in $\Omega$
	refers to the direct sum of $\alpha_1 \in L^2(\Omega_1)$ and $\alpha_2\in L^2(\Omega_2)$, while, for given 
	$u\in L^2(\Omega)$, $(u)_+(x) \coloneqq \max\{u(x);0\}$ $\lambda$-a.e.\ in $\Omega$ denotes the pointwise 
	maximum. 
\end{lemma}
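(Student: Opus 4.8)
The plan is to read the optimality system \eqref{eq:KantProbL2OptSystem} as the Karush--Kuhn--Tucker conditions of the strictly convex program \eqref{eq:KPgam} and to treat the two implications separately. Sufficiency of the system is an elementary convexity argument; necessity is a Lagrangian/Fenchel duality argument whose only delicate point is the solvability of the dual problem, which is precisely where the extra hypotheses $\mu_i\geq\delta$ and $c\geq\cbound$ are consumed. Throughout, uniqueness of the minimizer of \eqref{eq:KPgam} is taken for granted by Lemma~\ref{lem:KPgamexist}.

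\textbf{Sufficiency.} Suppose $(\pi_\gamma,\alpha_1,\alpha_2)$ solves \eqref{eq:KantProbL2OptSystem}. The functional $\KK_\gamma$ is convex and Fréchet differentiable on $L^2(\Omega)$ with $\KK_\gamma'(\pi_\gamma)=c+\gamma\pi_\gamma$, and \eqref{eq:KantProbL2OptSystem-a} gives $c+\gamma\pi_\gamma=\max\{c,\ \alpha_1\oplus\alpha_2\}$ $\lambda$-a.e., hence $c+\gamma\pi_\gamma\geq\alpha_1\oplus\alpha_2$ with equality on $\{\pi_\gamma>0\}$, i.e.\ $\langle c+\gamma\pi_\gamma-\alpha_1\oplus\alpha_2,\ \pi_\gamma\rangle=0$. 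For any $\pi$ feasible for \eqref{eq:KPgam}, Fubini's theorem together with \eqref{eq:KantProbL2OptSystem-b}--\eqref{eq:KantProbL2OptSystem-c} (which $\pi$ and $\pi_\gamma$ both satisfy) yields $\langle\alpha_1\oplus\alpha_2,\ \pi-\pi_\gamma\rangle=0$, whence $\langle\KK_\gamma'(\pi_\gamma),\ \pi-\pi_\gamma\rangle=\langle c+\gamma\pi_\gamma-\alpha_1\oplus\alpha_2,\ \pi\rangle\geq0$ because $\pi\geq0$. Convexity of $\KK_\gamma$ then gives $\KK_\gamma(\pi)\geq\KK_\gamma(\pi_\gamma)$, so $\pi_\gamma$ is the (unique) solution of \eqref{eq:KPgam}.

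\textbf{Necessity.} I would dualize the two marginal equalities in \eqref{eq:KPgam} with multipliers $\alpha_1\in L^2(\Omega_1)$, $\alpha_2\in L^2(\Omega_2)$ while keeping $\pi\geq0$ explicit. Pointwise minimization of $c(x)\pi(x)+\tfrac{\gamma}{2}\pi(x)^2-(\alpha_1\oplus\alpha_2)(x)\pi(x)$ over $\pi(x)\geq0$ forces $\pi(x)=\tfrac1\gamma(\alpha_1\oplus\alpha_2-c)_+(x)$ --- exactly \eqref{eq:KantProbL2OptSystem-a} --- and produces the concave dual functional
\[
 D(\alpha_1,\alpha_2):=\langle\alpha_1,\mu_1\rangle+\langle\alpha_2,\mu_2\rangle-\tfrac{1}{2\gamma}\,\|(\alpha_1\oplus\alpha_2-c)_+\|_{L^2(\Omega)}^2 .
\]
Weak duality $\sup D\leq\inf\KK_\gamma$ is immediate from the Fenchel--Young inequality. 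To close the gap I would show $D$ attains its supremum: using the invariance $(\alpha_1,\alpha_2)\mapsto(\alpha_1+t,\alpha_2-t)$ (legitimate since $\|\mu_1\|_{L^1(\Omega_1)}=\|\mu_2\|_{L^1(\Omega_2)}$), normalize $\int_{\Omega_1}\alpha_1\,\d\lambda_1=0$, and prove that $-D$ is coercive on the normalized subspace. Here $\mu_i\geq\delta>0$ makes the linear part of $D$ tend to $-\infty$ precisely in those directions where $\alpha_1\oplus\alpha_2-c$ stays nonpositive (so that the quadratic penalty is blind), while the penalty dominates on the complementary directions, and $c\geq\cbound$ enters the finiteness and lower-boundedness estimates for $D$. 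Since $(\alpha_1,\alpha_2)\mapsto\|(\alpha_1\oplus\alpha_2-c)_+\|_{L^2(\Omega)}^2$ is convex and continuous, hence weakly lower semicontinuous, the direct method yields a maximizer $(\bar\alpha_1,\bar\alpha_2)$. Its stationarity $\partial_{\alpha_i}D=0$ is exactly the pair of marginal identities \eqref{eq:KantProbL2OptSystem-b}--\eqref{eq:KantProbL2OptSystem-c} for $\bar\pi:=\tfrac1\gamma(\bar\alpha_1\oplus\bar\alpha_2-c)_+$; thus $\bar\pi$ is feasible for \eqref{eq:KPgam}, Fubini gives $\KK_\gamma(\bar\pi)=D(\bar\alpha_1,\bar\alpha_2)\leq\inf\KK_\gamma$, so $\bar\pi$ is optimal and equals $\pi_\gamma$ by uniqueness. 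Hence $(\pi_\gamma,\bar\alpha_1,\bar\alpha_2)$ solves \eqref{eq:KantProbL2OptSystem}.

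\textbf{Main obstacle.} The only non-routine step is the solvability of the dual problem. The primal feasible set of \eqref{eq:KPgam} has empty interior in $L^2(\Omega)$ (the a.e.\ sign constraint together with the marginal equalities leaves no interior point), so no Slater-type constraint qualification applies and strong duality cannot be imported from a standard theorem; the multipliers must be produced by a hands-on coercivity estimate for $D$. This is exactly where strict positivity $\mu_i\geq\delta>0$ (together with $c\geq\cbound$) is indispensable: without it $D$ need not be coercive, an $L^2$-multiplier need not exist, and the characterization \eqref{eq:KantProbL2OptSystem} would break down --- which is why these hypotheses appear in the statement.
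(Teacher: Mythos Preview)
The paper does not give its own proof of this lemma; it is simply quoted from \cite[Theorem~2.11]{LMM21}. Your proposal therefore cannot be compared against a proof in the present paper, only against the cited reference.

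That said, your approach is the standard one and is essentially what \cite{LMM21} does: sufficiency via the first-order variational inequality for the strictly convex objective, and necessity via Lagrangian/Fenchel duality together with a direct coercivity argument for the dual functional $D$. Your sufficiency argument is complete and correct. Your necessity outline is also correct, including the identification of the dual, the shift-normalization to kill the invariance $(\alpha_1,\alpha_2)\mapsto(\alpha_1+t,\alpha_2-t)$, and the observation that no black-box constraint qualification applies so that existence of the $L^2$-multipliers must be obtained by hand. The one place where your write-up is genuinely a sketch rather than a proof is the coercivity of $-D$ on the normalized subspace: the dichotomy you describe (``linear part dominates where the penalty is blind, penalty dominates elsewhere'') is morally right, but turning it into an honest estimate $-D(\alpha_1,\alpha_2)\to\infty$ as $\|\alpha_1\|_{L^2}+\|\alpha_2\|_{L^2}\to\infty$ requires a quantitative argument that actually uses $\mu_i\geq\delta$ and $c\geq\cbound$ in a specific way (in \cite{LMM21} this is done through a priori $L^\infty$- and then $L^2$-bounds on maximizing sequences rather than a one-line coercivity inequality). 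Filling in that estimate is the only missing piece; everything else in your proposal is rigorous.
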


With this result at hand, we can erase the regularized optimal transport plan $\pi_\gamma$ from \eqref{eq:BKgam2}
by using the necessary and sufficient conditions in \eqref{eq:KantProbL2OptSystem}.
Let us exemplarily consider \eqref{eq:BKgam2} with the objective from \eqref{eq:compobj2}. 
The corresponding regularized bilevel problem then reads as follows:
\begin{equation}\label{eq:BKgam_marg}
    \left\{\;
    \begin{aligned}
        \min \quad &
        \|\nabla \varphi \|_{L^{p'}(D;\R^d)}^{p'} 
        + \nu \,\| \mu_1  - \mu_1^\textup{d}\|_{\Measure(\Omega_*)}\\
        \text{s.t.} \quad & \alpha_1, \alpha_2\in L^2(\Omega_*), \quad \mu_1 \in \Prob(\Omega_*), \quad 
        \varphi \in W^{1,p'}_0(D),\\
        & \dist(\supp(\mu_1), \partial\Omega_*) \geq \rho, \\
        & - \laplace \varphi = \tfrac{1}{\gamma} (\alpha_1 \oplus \alpha_2 - \cost)_+ - \pi^{\textup{d}} \quad \text{in } W^{-1,p'}(D),\\
        & \int_{\Omega_*} (\alpha_1 \oplus \alpha_2 - \cost)_+(x_1, x_2)\d\lambda_*(x_2) = \gamma \, \TT_1^\delta(\mu_1)(x_1) 
        \quad \text{a.e.\ in } \Omega_*,\\
        & \int_{\Omega_*} (\alpha_1 \oplus \alpha_2 - \cost)_+(x_1, x_2)\d\lambda_*(x_1) = \gamma \, \TT_2^\delta(\marg)(x_2) 
        \quad \text{a.e.\ in } \Omega_*.
    \end{aligned}
    \right.
\end{equation}

\begin{remark}\label{rem:yosida}
    If one replaces the lower-level Kantorovich problem as
    in \eqref{eq:BK} by its necessary and sufficient 
    (and thus equivalent) optimality conditions, then an optimization problem with complementarity constraints (MPCC) in 
    $\Measure(\Omega)$ is obtained, see \cite[Section~3]{kant1}. Problems of this type are challenging, since standard 
    constraint qualifications are typically violated, even in finite dimensions. For this reason, regularization 
    and relaxation techniques are frequently applied, we only refer to \cite{HKS13} and the references therein.
    In light of the above reformulation based on Lemma~\ref{lem:quadreg}, the quadratic regularization of the Kantorovich 
    problem can be interpreted as a relaxation of the complementarity constraints, too, since it is 
    precisely the 
    Moreau--Yosida regularization of the dual Kantorovich problem as follows the considerations in \cite[Section~2.2]{LMM21}. 
    Indeed the Moreau--Yosida regularization of inequality constraints relaxes the complementarity constraints, 
    but the associated regularized optimization problems
    typically still contain a (moderately) non-smooth term, which is also observed here, in form of the $\max$-function 
    involved in \eqref{eq:BKgam_marg}. Even though it complicates the (numerical) solution of \eqref{eq:BKgam_marg}, 
    this is a desirable feature, as the $\max$-function promotes the sparsity of the optimal transport plan, 
    cf.\ the numerical experiments in \cite[Section~4.3]{LMM21}. A similar observation is also made in context of optimal 
    control of VIs such as the obstacle problem, where the
    $\max$-operator that arises from the Moreau--Yosida regularization of 
    the complementarity system can be further smoothed (see e.g.\ \cite{SW13}) or tackled by a semi-smooth Newton method 
    (cf.\ \cite{CMWC18}).
\end{remark}

Concerning the convergence of solutions to \eqref{eq:BKgam_marg}
for regularization parameters tending to zero, 
Theorem~\ref{thm:AbsContStrConvex} and Remark~\ref{rem:ohnec} imply the following result:

\begin{corollary}
    Let $D\subset \Omega$ satisfy the assumptions from Lemma~\ref{lem:groeger} and let $\pi^{\textup{d}} \in \Measure(\overline{D})$ 
    be given. Moreover, assume that $\marg \in L^1(\Omega_*)$
    and that the transportation costs $\cost$ fulfill 
    condition~\ref{it:costh} of Theorem~\ref{thm:AbsContStrConvex}. Then, for every
    sequence $\{(\gamma_n, \delta_n)\}_{n\in \N}$ tending to zero and fulfilling \eqref{eq:RegParamSeq}, 
    there exists a subsequence of solutions 
    to \eqref{eq:BKgam_marg} denoted by $(\bar\mu_1^n,  \bar\alpha_1^n, \bar \alpha_2^n, \bar \varphi_n)$ such that 
    \begin{equation*}
    \begin{aligned}
        \bar\mu_1^n \weak^* \bar\mu_1 \text{ in } \Measure(\Omega_*), \enskip 
        \tfrac{1}{\gamma}
        (\bar\alpha_1^n \oplus \bar\alpha_2^n - \cost)_+  \weak^* \bar\pi\text{ in } \Measure(\Omega), \enskip 
        \bar \varphi_n & \to \bar \varphi \text{ in } W^{1,p'}_0(D)
    \end{aligned}            
    \end{equation*}
    and the limit $(\bar\mu_1, \bar\pi, \bar \varphi)$ is a 
    solution to the bilevel Kantorovich problem with
    the objective from
    \eqref{eq:compobj2}.
\end{corollary}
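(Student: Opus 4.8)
The plan is to derive this corollary from Theorem~\ref{thm:AbsContStrConvex} together with Remark~\ref{rem:ohnec}, which together handle the convergence of the transport plans and marginals; the only genuinely new ingredient is the translation of the abstract convergence statement into the variables $(\bar\mu_1^n, \bar\alpha_1^n, \bar\alpha_2^n, \bar\varphi_n)$ of the reformulated problem~\eqref{eq:BKgam_marg}. First I would verify that the hypotheses of Theorem~\ref{thm:AbsContStrConvex} are met: assumption~\ref{it:Omega12} holds because $\Omega_1 = \Omega_2 = \Omega_*$ by the setup of this subsection; \ref{it:abscont} holds since $\marg \in L^1(\Omega_*)$ means $\marg \ll \lambda_*$; \ref{it:costh} is the assumed condition on $\cost$; and \ref{it:weakcont} was established above, since $\widehat\JJ$ from~\eqref{eq:compobj2} is weak-$\ast$ continuous (indeed continuous) in the first variable because $G$ is compact by Lemma~\ref{lem:groeger} and $\mu_1 \mapsto \nu\|\mu_1 - \mu_1^{\textup d}\|_{\Measure(\Omega_*)}$ does not depend on $\pi$; note also that $\widehat\JJ$ is weak-$\ast$ lower semicontinuous and bounded on bounded sets, so the standing assumptions on $\JJ$ are satisfied as well, and Proposition~\ref{prop:BKgamexist} guarantees that solutions of the regularized problems exist.

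Next I would make the identification between~\eqref{eq:BKgam_marg} and the regularized bilevel problem~\eqref{eq:BKgam2}. By Lemma~\ref{lem:quadreg}, for any feasible $\mu_1$ the triple $(\pi_\gamma, \alpha_1, \alpha_2)$ solves the optimality system~\eqref{eq:KantProbL2OptSystem} with marginals $\TT_1^\delta(\mu_1)$ and $\TT_2^\delta(\marg)$ if and only if $\pi_\gamma = \SS_\gamma(\cost, \TT_1^\delta(\mu_1), \TT_2^\delta(\marg))$ and $\pi_\gamma = \tfrac1\gamma(\alpha_1\oplus\alpha_2 - \cost)_+$; here the hypotheses of Lemma~\ref{lem:quadreg} are met because $\TT_i^\delta(\cdot) \geq \delta/|\Omega_i| > 0$ by the constant shift in~\eqref{eq:defTT} and $\cost$ is continuous on the compact set $\Omega$, hence bounded below. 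Substituting $\varphi = G(\pi_\gamma - \pi^{\textup d})$, i.e.\ $-\laplace\varphi = \pi_\gamma - \pi^{\textup d}$, and recalling $\widehat\JJ(\pi_\gamma,\mu_1) = \|\nabla G(\pi_\gamma - \pi^{\textup d})\|_{L^{p'}(D;\R^d)}^{p'} + \nu\|\mu_1 - \mu_1^{\textup d}\|$, one sees that~\eqref{eq:BKgam_marg} is precisely~\eqref{eq:BKgam2} with objective $\widehat\JJ$, after eliminating $\pi_\gamma$ via the equivalence in Lemma~\ref{lem:quadreg}. By Remark~\ref{rem:ohnec}, Theorem~\ref{thm:AbsContStrConvex} applies verbatim to~\eqref{eq:BKgam2}, so a subsequence of solutions converges: $\bar\mu_1^n \weak^* \bar\mu_1$ in $\Measure(\Omega_*)$ and $\bar\pi_n \weak^* \bar\pi$ in $\Measure(\Omega)$, with $(\bar\pi, \bar\mu_1)$ optimal for~\eqref{eq:BK} with objective $\widehat\JJ$. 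Writing $\bar\pi_n = \tfrac1\gamma(\bar\alpha_1^n\oplus\bar\alpha_2^n - \cost)_+$ gives the stated convergence of the regularized plans.

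Finally I would establish the strong convergence $\bar\varphi_n \to \bar\varphi$ in $W^{1,p'}_0(D)$, where $\bar\varphi_n = G(\bar\pi_n - \pi^{\textup d})$ and $\bar\varphi = G(\bar\pi - \pi^{\textup d})$. Since $G : \Measure(\overline D) \to W^{1,p'}_0(D)$ is compact by Lemma~\ref{lem:groeger}, and $\bar\pi_n - \pi^{\textup d} \weak^* \bar\pi - \pi^{\textup d}$ in $\Measure(\overline D) \hookrightarrow W^{-1,p'}(D)$ (here weak-$\ast$ convergence in $\Measure(\Omega)$ of $\bar\pi_n$, restricted to $\overline D \subset \Omega$, yields weak-$\ast$ convergence in $\Measure(\overline D)$, noting that $\supp \pi^{\textup d} \subset \overline D$), compactness of $G$ converts this into strong convergence $G(\bar\pi_n - \pi^{\textup d}) \to G(\bar\pi - \pi^{\textup d})$ in $W^{1,p'}_0(D)$. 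That $(\bar\mu_1, \bar\pi, \bar\varphi)$ is a solution of the bilevel problem with objective~\eqref{eq:compobj2} follows because $(\bar\pi, \bar\mu_1)$ solves~\eqref{eq:BK} with that objective and $\bar\varphi$ is simply the associated state $G(\bar\pi - \pi^{\textup d})$. The main obstacle is really a bookkeeping one: carefully checking that~\eqref{eq:BKgam_marg} is a faithful reformulation of~\eqref{eq:BKgam2}—in particular that the constraints involving $\gamma\,\TT_i^\delta$ in~\eqref{eq:BKgam_marg} are exactly~\eqref{eq:KantProbL2OptSystem-b}--\eqref{eq:KantProbL2OptSystem-c} rescaled, and that no solutions are gained or lost in the elimination of $\pi_\gamma$—and confirming that $\widehat\JJ$ indeed satisfies condition~\ref{it:weakcont}, which hinges on the compactness of $G$.
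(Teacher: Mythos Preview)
Your proposal is correct and follows exactly the route the paper intends: the paper gives no explicit proof here but simply states that the corollary follows from Theorem~\ref{thm:AbsContStrConvex} and Remark~\ref{rem:ohnec}, and your argument is the natural elaboration (verify hypotheses~\ref{it:Omega12}--\ref{it:weakcont} for $\widehat\JJ$, identify \eqref{eq:BKgam_marg} with \eqref{eq:BKgam2} via Lemma~\ref{lem:quadreg}, and deduce strong convergence of $\bar\varphi_n$ from the compactness of $G$). One small point to word more carefully: the claim that ``restriction to $\overline D$ yields weak-$\ast$ convergence in $\Measure(\overline D)$'' is not literally true in general (mass could concentrate on $\partial D$), but what you actually need---and what does hold---is that test functions in $W^{1,p}_0(D)$, extended by zero, lie in $C(\Omega)$, so weak-$\ast$ convergence in $\Measure(\Omega)$ already gives convergence in $W^{-1,p'}(D)$, and the compact embedding $\Measure(\overline D)\hookrightarrow W^{-1,p'}(D)$ upgrades this to strong convergence.
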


Though the transport plan has been erased from the regularized bilevel problem, the quadratic regularization 
does not completely resolve the ``curse of dimensionality'' associated with the Kantorovich problem, since it involves a 
PDE on $D$, which is a subset of $\Omega$ and thus 
$\dim(D) = d_1 + d_2$.
By contrast, the next example provides a substantial reduction of the dimension for the price however that 
one looses convexity.

\subsubsection{Optimal Control in Wasserstein Spaces}
For our second example for a problem fulfilling the assumptions of Theorem~\ref{thm:AbsContStrConvex}, 
we suppose that $\Omega_* := \Omega_1 = \Omega_2$ satisfies the same assumptions as 
$\Omega = \Omega_* \times \Omega_*$, i.e., it coincides with the closure of its interior 
and has a Lipschitz boundary.
As in case of $\Omega$, we write
$W^{1,q}_0(\Omega_*) := W^{1,q}_0(\interior(\Omega_*))$
for the Sobolev space with vanishing trace. 
In order to have Lemma~\ref{lem:groeger} at our disposal, we moreover assume that $d_* \leq 3$.
Furthermore, we fix $\nu > 0$, $\beta > 1$, and $q>d_*$ and set $q' = q/(q-1)$. 
We then consider the following \emph{elliptic optimal control problem}:
\begin{equation}\tag{OCP}\label{eq:ocp}
    \left\{\quad
    \begin{aligned}
        \min_{y,\mu_1} \quad & \tfrac{1}{2}\,\| y - y_\textup{d}\|_{L^2(\Omega_*)}^2 + \nu \, W_\beta(\mu_1, \marg)^\beta \\
        \text{s.t.} \quad &  y \in W^{1,q'}_0(\Omega_*), \quad \mu_1 \in \Prob(\Omega_*), \\
        & \dist(\supp(\mu_1), \partial\Omega_*) \geq \rho, \quad - \laplace y =  \mu_1 \;\text{ in }W^{-1,q'}(\Omega_*),
    \end{aligned}
    \right.
\end{equation}
where $W_\beta(\mu_1, \marg)$ denotes the Wasserstein-$\beta$-distance between $\mu_1$ 
and $\marg$ given by
\begin{equation}\label{eq:wasserstein}
    W_\beta(\mu_1, \marg) := \min\left\{\int_\Omega  |x_1 - x_2|^\beta\,\d\varphi(x_1,x_2)
    \colon\varphi\in\Pi(\mu_1,\marg), \,\varphi\geq 0\right\}^{\frac{1}{\beta}}.
\end{equation}
Moreover, $y_{\textup{d}} \in L^2(\Omega_*)$ is a given desired state. 
Note that, again, due to $q > d_*$, there holds $\Measure(\Omega_*) \embed W^{-1,q'}(\Omega_*)$ 
with compact embedding such that the right hand side in Poisson equation in \eqref{eq:ocp} is well defined.

\begin{remark}
    Depending on the application background, it might be favorable to 
    measure the distance of the control $\mu_1$ to a given prior $\marg$ in the Wasserstein distance 
    instead of taking, e.g., the total variation norm $|\mu_1 - \marg|(\Omega_*)$. 
    Optimal control problems in measure spaces with the total variation as control costs have intensively been studied 
    in literature, we only refer to \cite{CCK12, CK14, CK19} and the references therein.    
    However, the total variation might be a too strong norm for several applications, see, e.g.,
    \cite{B2018, PZ2019, TPG2016}
    for beneficial properties of the Wasserstein distance.
\end{remark}

Due to $d_* \leq 3$ and our regularity assumptions on $\Omega_*$, Lemma~\ref{lem:groeger} is applicable 
and guarantees the existence of a unique solution $y$ of the Poisson equation in \eqref{eq:ocp} for every 
$\mu_1 \in \Measure(\Omega_*)$. The associated solution operator is again denoted by 
$G: \Measure(\Omega_*) \to W^{1,q'}_0(\Omega_*)$.
Given this solution operator, we can erase the state $y$ from \eqref{eq:ocp}, which, 
together with \eqref{eq:wasserstein}, leads to the following reformulation:
\begin{equation*}
\begin{aligned}
    \eqref{eq:ocp} \; & \Longleftrightarrow \;
    \left\{\;
    \begin{aligned}
        \min_{\pi,\mu_1} \quad & \JJ(\pi, \mu_1):= 
        \tfrac{1}{2}\,\| G \mu_1 - y_\textup{d}\|_{L^2(\Omega_*)}^2 + \nu \int_\Omega |x_1 - x_2|^\beta \,\d\pi(x_1,x_2) \\
        \text{s.t.} \quad & \mu_1\in \Prob(\Omega_*), \quad \dist(\supp(\mu_1), \partial\Omega_*) \geq \rho, \\
        & \pi \in \argmin\left\{\int_\Omega  |x_1 - x_2|^\beta\,\d\varphi(x_1,x_2)
        \colon\varphi\in\Pi(\mu_1,\marg), \,\varphi\geq 0\right\}
    \end{aligned}
    \right. \\
    & \Longleftrightarrow \;
    \left\{\;
    \begin{aligned}
        \min_{\pi} \quad & 
        \tfrac{1}{2}\,\| G ({P_1}_\# \pi) - y_\textup{d}\|_{L^2(\Omega_*)}^2 + \nu \int_\Omega |x_1 - x_2|^\beta \,\d\pi(x_1,x_2) \\
        \text{s.t.} \quad & \pi \in \Measure(\Omega),\quad
        \dist(\supp({P_1}_\# \pi), \partial\Omega_*) \geq \rho,\\
        & \pi \in\Pi({P_1}_\# \pi,\marg), \quad\pi\geq 0.
    \end{aligned}
    \right.
\end{aligned}
\end{equation*}
While the first reformulation is a bilevel problem of the form \eqref{eq:BK}, the second
one is (astonishingly) a convex problem, which is of course a favorable feature. However, as the transport plan is the
optimization variable, we have to deal with a problem in $\Omega = \Omega_* \times \Omega_*$.
Using Lemma~\ref{lem:quadreg}, the quadratic regularization allows to avoid this ``curse of dimensionality''.
Abbreviating the transportation costs associated with the Wasserstein-$\beta$-distance 
by $\cost$, i.e., $\cost(x_1,x_2) = |x_1-x_2|^\beta$, the regularized counterpart to \eqref{eq:ocp} reads:
\begin{equation}\tag{OCP$_\gamma^\delta$}\label{eq:ocpgam}
    \left\{\;
    \begin{aligned}
        \min \quad &
        \begin{aligned}[t]
            \tfrac{1}{2}\,\| y - y_\textup{d}\|_{L^2(\Omega_*)}^2 
            &+ \frac{\nu}{\gamma} 
            \int_\Omega \cost\, (\alpha_1 \oplus \alpha_2 - \cost)_+\,\d\lambda \\
        \end{aligned}\\       
        \text{s.t.} \quad &  y \in W^{1,q'}_0(\Omega_*), \;\; \alpha_1, \alpha_2\in L^2(\Omega_*), \; \;\mu_1 \in \Prob(\Omega_*), \\
        &  \dist(\supp(\mu_1), \partial\Omega_*) \geq \rho, \quad - \laplace y =  \mu_1 \;\text{ in }W^{-1,q'}(\Omega_*),\\
        & \int_{\Omega_*} (\alpha_1 \oplus \alpha_2 - \cost)_+(x_1, x_2)\d\lambda_*(x_2) = \gamma \, \TT_1^\delta(\mu_1)(x_1) 
        \;\; \text{a.e.\ in } \Omega_*,\\
        & \int_{\Omega_*} (\alpha_1 \oplus \alpha_2 - \cost)_+(x_1, x_2)\d\lambda_*(x_1) = \gamma \, \TT_2^\delta(\marg)(x_2) 
        \;\;\text{a.e.\ in } \Omega_*.
    \end{aligned}
    \right.
\end{equation}
We observe that this problem no longer contains any variable or constraint in $\Omega$, 
but only quantities and constraints in $\Omega_*$. However, the price we have to pay for this reduction of the dimension is 
a loss of convexity, since \eqref{eq:ocpgam} is no longer a convex problem due to the equality constraints including 
the $\max$-function.

\begin{remark}
    We point out that alternative regularization procedures like the entropic regularization 
    lead to similar non-convex equality constraints, see \cite[Theorem~4.8]{CLM21}.
    Alternatively, one might replace the Kantorovich problem in the bilevel formulation of \eqref{eq:ocp}
    by its dual problem without any further regularization. At first glance, this seems to be promising, 
    since the dual Kantorovich problem is posed in $C(\Omega_*) \times C(\Omega_*)$ instead of 
    $\Measure(\Omega_* \times \Omega_*)$ indicating the desired reduction of the dimension, 
    see \cite[Theorem~5.10]{Vil09} for the derivation of the dual Kantorovich problem.
    However, the bilevel problem then becomes a $\min$-$\max$-problem including a constraint in $\Omega_* \times \Omega_*$.
    To summarize, it seems that a reduction of the dimension without increasing the complexity of the problem 
    is impossible.
\end{remark}

Since the objective $\JJ$ in the bilevel formulation of \eqref{eq:ocp} is linear in $\pi$, Theorem~\ref{thm:AbsContStrConvex}
is applicable, which yields the following

\begin{corollary}\label{cor:ocp}
    Suppose that $\Omega_* \in \R^{d_*}$, $d_*\leq 3$, is such that $\overline{\interior(\Omega_*)} = \Omega_*$ and 
    $\interior(\Omega_*)$ is a bounded Lipschitz domain.
    Let $\nu > 0$ and $\beta > 1$ be given and let $q>d_*$ be the exponent from Lemma~\ref{lem:groeger}.
    Moreover, assume that $\marg \in L^2(\Omega_*)$.
    Then, for a given a sequence
    $\{(\gamma_n, \delta_n)\}_{n\in \N}$ 
    tending to zero and fulfilling \eqref{eq:RegParamSeq}, 
    there exists a subsequence of solutions \eqref{eq:ocpgam} denoted by 
    $(\bar\mu_1^n,  \bar\alpha_1^n, \bar \alpha_2^n, \bar y_n)$
    such that
    \begin{equation*}
    \begin{aligned}
        \bar\mu_1^n \weak^* \bar\mu_1 \;\; \text{in } \Measure(\Omega_*), \;\;
        \tfrac{1}{\gamma} (\bar\alpha_1^n \oplus \bar\alpha_2^n
         - \cost)_+ \weak^* \bar\pi \;\;\text{in } \Measure(\Omega), \;\;
        \bar y_n \to \bar y \;\; \text{in } W^{1,q}_0(\Omega_*)
     \end{aligned}            
     \end{equation*}
    and the limit $(\bar\mu_1, \bar y)$ is a solution of \eqref{eq:ocp}.
\end{corollary}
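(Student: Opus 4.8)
The plan is to deduce the corollary from Theorem~\ref{thm:AbsContStrConvex} together with Remark~\ref{rem:ohnec} (which allows suppressing $c$ as an optimization variable), once \eqref{eq:ocpgam} has been identified with a regularized bilevel problem of the form \eqref{eq:BKgam2} and \eqref{eq:ocp} with a bilevel problem of the form \eqref{eq:BK}. The second identification is exactly the chain of equivalences displayed just before the corollary: \eqref{eq:ocp} is equivalent to \eqref{eq:BK} with $\Omega_1=\Omega_2=\Omega_*$, cost $\cost(x_1,x_2)=|x_1-x_2|^\beta$, and upper-level objective $\JJ(\pi,\mu_1)=\tfrac12\|G\mu_1-y_\textup{d}\|_{L^2(\Omega_*)}^2+\nu\int_\Omega|x_1-x_2|^\beta\,\d\pi$, where $G$ is the (compact) solution operator of the Poisson equation from Lemma~\ref{lem:groeger}.

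Next I would verify the hypotheses of Theorem~\ref{thm:AbsContStrConvex} for this $\JJ$ and $\cost$. Condition~\ref{it:Omega12} holds by construction; since $\marg\in L^2(\Omega_*)$ we have $\marg\ll\lambda_*$, so \ref{it:abscont} holds; the function $h(\xi)=|\xi|^\beta$ is even and, because $\beta>1$, strictly convex on $\R^{d_*}$, so \ref{it:costh} holds and \cite[Theorem~2.44]{Vil09} applies; and since $(x_1,x_2)\mapsto|x_1-x_2|^\beta$ is continuous on the compact set $\Omega$, the map $\pi\mapsto\nu\int_\Omega|x_1-x_2|^\beta\,\d\pi$ is weak-$\ast$ continuous, in particular $\JJ$ is weak-$\ast$ upper semicontinuous in its first argument, which is \ref{it:weakcont}. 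The standing assumptions on $\JJ$ are also met: $\mu_1\mapsto\tfrac12\|G\mu_1-y_\textup{d}\|_{L^2(\Omega_*)}^2$ is weak-$\ast$ continuous because $G\colon\Measure(\Omega_*)\to W^{1,q'}_0(\Omega_*)$ is compact and $W^{1,q'}_0(\Omega_*)\embed L^2(\Omega_*)$, and both summands are bounded on bounded sets.

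Then I would carry out the reduction of \eqref{eq:ocpgam} by means of Lemma~\ref{lem:quadreg}. For each $n$, the smoothed marginals $\TT_1^{\delta_n}(\mu_1)$ and $\TT_2^{\delta_n}(\marg)$ lie in $L^2(\Omega_*)$, are bounded below by $\delta_n/|\Omega_*|>0$, and have equal $L^1$-mass by \eqref{eq:equalmass}, while $\cost\geq 0$; hence Lemma~\ref{lem:quadreg} shows that $(\mu_1,\alpha_1,\alpha_2,y)$ is feasible (resp.\ optimal) for \eqref{eq:ocpgam} if and only if $(\pi_\gamma,\mu_1)$ with $\pi_\gamma:=\tfrac1{\gamma_n}(\alpha_1\oplus\alpha_2-\cost)_+=\SS_{\gamma_n}(\cost,\TT_1^{\delta_n}(\mu_1),\TT_2^{\delta_n}(\marg))$ and $y=G\mu_1$ is feasible (resp.\ optimal) for \eqref{eq:BKgam2}, the objectives agreeing because $\tfrac{\nu}{\gamma_n}\int_\Omega\cost\,(\alpha_1\oplus\alpha_2-\cost)_+\,\d\lambda=\nu\int_\Omega|x_1-x_2|^\beta\,\d\pi_\gamma$. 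Given solutions $(\bar\mu_1^n,\bar\alpha_1^n,\bar\alpha_2^n,\bar y_n)$ of \eqref{eq:ocpgam}, I set $\bar\pi_n:=\tfrac1{\gamma_n}(\bar\alpha_1^n\oplus\bar\alpha_2^n-\cost)_+$, so that $(\bar\pi_n,\bar\mu_1^n)$ solves \eqref{eq:BKgam2} with parameters $(\gamma_n,\delta_n)$ and $\{(\bar\pi_n,\bar\mu_1^n)\}$ is bounded in $\Measure(\Omega)\times\Measure(\Omega_*)$ by \eqref{eq:PiBarEst}. Theorem~\ref{thm:AbsContStrConvex} and Remark~\ref{rem:ohnec} then give a subsequence with $(\bar\pi_n,\bar\mu_1^n)\weak^\ast(\bar\pi,\bar\mu_1)$, the limit being optimal for \eqref{eq:BK}; by the equivalence of the first paragraph, $\bar\mu_1$ is an optimal control for \eqref{eq:ocp} and $\bar\pi$ the associated optimal transport plan. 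Finally, since $\bar y_n=G\bar\mu_1^n$, the compactness of $G$ upgrades $\bar\mu_1^n\weak^\ast\bar\mu_1$ to $\bar y_n\to\bar y:=G\bar\mu_1$ in $W^{1,q'}_0(\Omega_*)$, and $\bar y$ solves the Poisson equation with right-hand side $\bar\mu_1$; together with $\bar\pi_n\weak^\ast\bar\pi$ this yields all asserted convergences.

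I do not expect a deep obstacle here: the proof is essentially an assembly around Theorem~\ref{thm:AbsContStrConvex}. The only steps that need a little care are the reduction via Lemma~\ref{lem:quadreg} — in particular checking strict positivity and equal mass of the smoothed marginals together with lower boundedness of $\cost$, so that \eqref{eq:ocpgam} is genuinely of the form \eqref{eq:BKgam2} — and keeping track of the fact that weak-$\ast$ compactness is available only for the measures $\bar\pi_n=\tfrac1{\gamma_n}(\bar\alpha_1^n\oplus\bar\alpha_2^n-\cost)_+$ and not for the potentials $\bar\alpha_i^n$ themselves, so that no bound on $\{\bar\alpha_i^n\}$ is needed or claimed.
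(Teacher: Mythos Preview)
Your proposal is correct and follows essentially the same route as the paper: verify the hypotheses of Theorem~\ref{thm:AbsContStrConvex} for the reduced bilevel form of \eqref{eq:ocp} (symmetry and strict convexity of $h(\xi)=|\xi|^\beta$, absolute continuity of $\marg$, weak-$\ast$ continuity of $\JJ$ via linearity in $\pi$ and compactness of $G$), then invoke Remark~\ref{rem:ohnec}. You are in fact more explicit than the paper, which does not spell out the reduction of \eqref{eq:ocpgam} to \eqref{eq:BKgam2} via Lemma~\ref{lem:quadreg} or the strong convergence $\bar y_n\to\bar y$ through the compactness of $G$; these are useful details to include.
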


\begin{proof}
    We verify the assumptions of
    Theorem~\ref{thm:AbsContStrConvex} for the reduced form of 
    \eqref{eq:ocp}. 
    First, $\cost (x_1, x_2) = h(x_1-x_2) := |x_1 - x_2|^\beta$ 
    is clearly symmetric and strictly convex (since $\beta > 1$ 
    by assumption) such that condition~\ref{it:costh} is met. 
    Moreover, the upper-level objective is even weak-$\ast$ 
    continuous due to the linearity of 
    $\pi \mapsto \int_\Omega \cost \d\pi$ and the compactness of $G$. Moreover, it is clearly bounded on bounded 
    sets and therefore, $\JJ$ fulfills our standing assumptions as well as 
    condition~\ref{it:weakcont}. Thus, since $\marg \ll \lambda_*$ by 
    assumption, all hypotheses of Theorem~\ref{thm:AbsContStrConvex} are fulfilled and the assertion follows by Remark~\ref{rem:ohnec}.
\end{proof}

In particular, due to the non-smooth $\max$-operator in its
constraints, \eqref{eq:ocpgam} itself is a challenging problem. 
However, for instance by employing a further smoothing of $\max$, 
it might be possible to approximate \eqref{eq:ocpgam} by smooth infinite-dimensional optimization problems, 
similarly to optimal control problems governed by VIs, see Remark~\ref{rem:yosida}.
In light of Corollary~\ref{cor:ocp}, this might open a way for a numerical solution of \eqref{eq:ocp} without ``falling victim 
to the curse of dimensionality''. 
An efficient solution of \eqref{eq:ocpgam} would however go beyond the scope of this paper and is subject to future research.


\bibliographystyle{plain}
\bibliography{ref}

\begin{thebibliography}{10}

\bibitem{AF03}
Robert~A. Adams and John J.~F. Fournier.
\newblock {\em Sobolev spaces}, volume 140 of {\em Pure and Applied Mathematics
  (Amsterdam)}.
\newblock Elsevier/Academic Press, Amsterdam, second edition, 2003.

\bibitem{AG13}
Luigi Ambrosio and Nicola Gigli.
\newblock A user’s guide to optimal transport.
\newblock In {\em Modelling and optimisation of flows on networks}, pages
  1--155. Springer, 2013.

\bibitem{ABM06}
Hedy Attouch, Giuseppe Buttazzo, and G\'{e}rard Michaille.
\newblock {\em Variational analysis in {S}obolev and {BV} spaces}, volume~6 of
  {\em MPS/SIAM Series on Optimization}.
\newblock Society for Industrial and Applied Mathematics (SIAM), Philadelphia,
  PA; Mathematical Programming Society (MPS), Philadelphia, PA, 2006.
\newblock Applications to PDEs and optimization.

\bibitem{BP21}
V.~Bogachev and S.~Popova.
\newblock Optimal transportation of measures with a parameter.
\newblock arXiv:2111.13014, 2021.

\bibitem{B2018}
Vladimir~Igorevich Bogachev.
\newblock {\em Weak Convergence of Measures}.
\newblock American Mathematical Society, Providence, 2018.

\bibitem{CCK12}
Eduardo Casas, Christian Clason, and Karl Kunisch.
\newblock Approximation of elliptic control problems in measure spaces with
  sparse solutions.
\newblock {\em SIAM J. Control Optim.}, 50(4):1735--1752, 2012.

\bibitem{CK14}
Eduardo Casas and Karl Kunisch.
\newblock Optimal control of semilinear elliptic equations in measure spaces.
\newblock {\em SIAM J. Control Optim.}, 52(1):339--364, 2014.

\bibitem{CK19}
Eduardo Casas and Karl Kunisch.
\newblock Optimal control of the two-dimensional stationary {N}avier-{S}tokes
  equations with measure valued controls.
\newblock {\em SIAM J. Control Optim.}, 57(2):1328--1354, 2019.

\bibitem{CdlRM20}
Constantin Christof, Juan~Carlos De~los Reyes, and Christian Meyer.
\newblock A nonsmooth trust-region method for locally {L}ipschitz functions
  with application to optimization problems constrained by variational
  inequalities.
\newblock {\em SIAM J. Optim.}, 30(3):2163--2196, 2020.

\bibitem{CMWC18}
Constantin Christof, Christian Meyer, Stephan Walther, and Christian Clason.
\newblock Optimal control of a non-smooth semilinear elliptic equation.
\newblock {\em Math. Control Relat. Fields}, 8(1):247--276, 2018.

\bibitem{CLM21}
Christian Clason, Dirk~A. Lorenz, Hinrich Mahler, and Benedikt Wirth.
\newblock Entropic regularization of continuous optimal transport problems.
\newblock {\em J. Math. Anal. Appl.}, 494(1):Paper No. 124432, 22, 2021.

\bibitem{Cut13}
Marco Cuturi.
\newblock Sinkhorn distances: {L}ightspeed computation of optimal transport.
\newblock In {\em Advances in neural information processing systems}, pages
  2292--2300, 2013.

\bibitem{CP18}
Marco Cuturi and Gabriel Peyr{\'e}.
\newblock Semidual regularized optimal transport.
\newblock {\em SIAM Review}, 60(4):941--965, 2018.

\bibitem{Gri85}
Pierre Grisvard.
\newblock {\em Elliptic Problems in Nonsmooth Domains}.
\newblock Classics in Applied Mathematics. SIAM, Philadelphia, 1985.

\bibitem{Gro89}
Konrad Gr{\"o}ger.
\newblock {A} {$W^{1,p}$}-estimate for solutions to mixed boundary value
  problems for second order elliptic differential equations.
\newblock {\em Mathematische Annalen}, 283(4):679--687, 1989.

\bibitem{HU19}
Lukas Hertlein and Michael Ulbrich.
\newblock An inexact bundle algorithm for nonconvex nonsmooth minimization in
  {H}ilbert space.
\newblock {\em SIAM J. Control Optim.}, 57(5):3137--3165, 2019.

\bibitem{kant1}
Sebastian Hillbrecht and Christian Meyer.
\newblock Bilevel optimization of the {K}antorovich problem and its quadratic
  regularization, part {I}: Existence results.
\newblock arXiv:2206.13220, 2022.

\bibitem{kant3}
Sebastian Hillbrecht and Christian Meyer.
\newblock Bilevel optimization of the {K}antorovich problem and its quadratic
  regularization, part {III}: Reverse approximation in finite dimensions.
\newblock In preparation, 2022.

\bibitem{HS16}
Michael Hinterm{\"u}ller and Thomas Surowiec.
\newblock A bundle-free implicit programming approach for a class of elliptic
  {MPECs} in function space.
\newblock {\em Math. Program.}, 160(1--2):271--305, 2016.

\bibitem{HKS13}
Tim Hoheisel, Christian Kanzow, and Alexandra Schwartz.
\newblock Theoretical and numerical comparison of relaxation methods for
  mathematical programs with complementarity constraints.
\newblock {\em Math. Program.}, 137(1-2, Ser. A):257--288, 2013.

\bibitem{JK95}
D.~Jerison and C.~Kenig.
\newblock The inhomogeneous {D}irichlet problem in {L}ipschitz domains.
\newblock {\em J. Funct. Anal.}, 130(1):161--219, 1995.

\bibitem{ML21}
Dirk Lorenz and Hinrich Mahler.
\newblock Orlicz space regularization of continuous optimal transport problems.
\newblock arXiv:2004.11574, 2021.

\bibitem{LMM21}
Dirk~A. Lorenz, Paul Manns, and Christian Meyer.
\newblock Quadratically regularized optimal transport.
\newblock {\em Appl. Math. Optim.}, 83(3):1919--1949, 2021.

\bibitem{MW21}
Christian Meyer and Stephan Walther.
\newblock Optimal control of perfect plasticity {P}art {II}: {D}isplacement
  tracking.
\newblock {\em SIAM J. Control Optim.}, 59(4):2498--2523, 2021.

\bibitem{PZ2019}
Victor~M Panaretos and Yoav Zemel.
\newblock Statistical aspects of wasserstein distances.
\newblock {\em Annual review of statistics and its application}, 6:405--431,
  2019.

\bibitem{San15}
Filippo Santambrogio.
\newblock {\em Optimal transport for applied mathematicians}, volume~87 of {\em
  Progress in Nonlinear Differential Equations and their Applications}.
\newblock Birkh\"{a}user/Springer, Cham, 2015.
\newblock Calculus of variations, PDEs, and modeling.

\bibitem{SW13}
A.~Schiela and D.~Wachsmuth.
\newblock Convergence analysis of smoothing methods for optimal control of
  stationary variational inequalities with control constraints.
\newblock {\em ESAIM: M2AN}, 47:771--787, 2013.

\bibitem{Sim72}
Christian~G. Simader.
\newblock {\em On Dirichlet's Boundary Value Problem}, volume 268 of {\em
  Lecture Notes in Mathematics}.
\newblock Springer, New York, 1972.

\bibitem{TPG2016}
Guillaume Tartavel, Gabriel Peyr{\'e}, and Yann Gousseau.
\newblock Wasserstein loss for image synthesis and restoration.
\newblock {\em SIAM Journal on Imaging Sciences}, 9(4):1726--1755, 2016.

\bibitem{Vil03}
C\'{e}dric Villani.
\newblock {\em Topics in optimal transportation}, volume~58 of {\em Graduate
  Studies in Mathematics}.
\newblock American Mathematical Society, Providence, RI, 2003.

\bibitem{Vil09}
C\'{e}dric Villani.
\newblock {\em Optimal transport. Old and new}, volume 338 of {\em Grundlehren
  der Mathematischen Wissenschaften [Fundamental Principles of Mathematical
  Sciences]}.
\newblock Springer-Verlag, Berlin, 2009.

\end{thebibliography}

\end{document}